\newcommand{\N}{\mathbb N}
\newcommand{\R}{\mathbb R}
\newcommand{\E}{\mathbb E}
\newcommand{\pr}{\mathbb P}
\newcommand{\salj}{\mathcal{F}}
\renewcommand{\labelenumi}{(\alph{enumi})}
\begin{document}
\theoremstyle{plain}
\newtheorem{thm}{Theorem}[section]
\newtheorem{lem}[thm]{Lemma}
\newtheorem{cor}[thm]{Corollary}
\newtheorem{prop}[thm]{Proposition}
\newtheorem{definition}{Definition}

\title{Phase transitions in non-linear urns with interacting types}
\author{Marcelo Costa \\ Universidad de Buenos Aires \thanks{\href{mailto:mrocha@dm.uba.ar}{mrocha@dm.uba.ar}} \and Jonathan Jordan \\ University of Sheffield \thanks{\href{mailto:jonathan.jordan@sheffield.ac.uk}{jonathan.jordan@sheffield.ac.uk}}}
\maketitle

\begin{abstract}We investigate reinforced non-linear urns with interacting types, and show that where there are three interacting types there are phenomena which do not occur with two types. In a model with three types where the interactions between the types are symmetric, we show the existence of a double phase transition with three phases: as well as a phase with an almost sure limit where each of the three colours is equally represented and a phase with almost sure convergence to an asymmetric limit, which both occur with two types, there is also an intermediate phase where both symmetric and asymmetric limits are possible.  In a model with anti-symmetric interactions between the types, we show the existence of a phase where the proportions of the three colours cycle and do not converge to a limit, alongside a phase where the proportions of the three colours can converge to a limit where each of the three is equally represented.\end{abstract}

\section{Introduction and definitions}

In general terms, an urn model is a system containing a number of particles of different types
(often regarded as balls of different colours, for ease of visualisation).
At each time step, a set of particles is sampled from the system, whose contents are then altered depending on the sample which was drawn.
Pemantle \cite{pemantlesurvey} surveys several ways to approach this model framework.

This paper is limited to models with a single urn from which a single ball is drawn, its colour is noted and it is then returned to the urn along with one new ball of that same colour.
In addition, we introduce a graph-based interaction according to which the probability of choosing a ball of a given colour is reinforced not only by its own proportion in the urn, but also by the proportions of balls of other colours. 
Therefore, the interaction arises among balls of different colours, as opposed to the so-called interacting urn models consisting of  systems of multiple urns (e.g. Bena\"{i}m et al \cite{BBCL15}, Launay and Limic \cite {LL12}) in which different urns (each containing balls of different colours) interact with one another.  Our model is also different from the graph-based competition described in van der Hofstad et al \cite{van2016strongly}, where the colours correspond to edges of the graph, which compete with, as opposed to being reinforced by, other edges incident on the same vertices.

We now formally define our model.  Consider an urn containing balls of $d$ colours.
The vector $x(n) = (x_1(n),\ldots, x_d(n)) \in \N^d$ denotes the number of balls of each colour at time $n=0,1,2,...$.
The strength of the reinforcement is given by a positive real number $\beta>0$ and we denote by $x^{\beta}(n)$ the coordinate-wise $\beta$ power of the column vector $x(n)$.
The interaction is defined as follows.
Given a non-negative matrix $A = (a_{ij})_{i,j=1}^d$, define the column vector 

\begin{equation}
  \label{eq:potential}
  u(n) := A x^{\beta}(n)
\end{equation}
Let $\salj_n$ be the $\sigma$-algebra generated by the $x(m)$ for $0\leq m\leq n$ and write $u_i(n)$ for the $i$-th component of $u(n)$.
The transition probabilities are then
\begin{equation}
  \label{eq:transition}
\pr(x(n+1)-x(n)=\mathbf{e}_i \> | \> \salj_n)=\frac{u_i(n)}{\sum_{j=1}^{d} u_j(n)}, \quad i=1,\ldots,d, 
\end{equation}
where $\mathbf{e}_i$ is the unit vector in direction $i$.  That is, one ball is added to the urn at each time step, and the right hand side of \eqref{eq:transition} gives the probability that it is of colour $i$.

Now, let $n_0>0$ be the initial number of balls so that at time $n$ the urn contains $n+n_0$ balls.
Then, the proportion of balls of each colour is a process in the $(d-1)$-dimensional simplex $\Delta^{d-1}$ given by the vector
\begin{equation}
  \label{eq:proportion}
\bar{x}(n)=x(n)/(n+n_0).  
 \end{equation}
When $A$ is a multiple of the identity matrix (therefore, $\bar{x}(n)$ having no interaction) it is well-known (see Oliveira \cite{RO07}) that the process $\bar{x}$ undergoes a phase transition as follows.
For $\beta < 1$, the process converges almost surely to the `centre' of the simplex, that is, the asymptotic proportion of balls of each colour are all the same.
For $\beta = 1$, commonly referred to as the P\'olya urn model, the process converges almost surely to a non-trivial random variable supported in the interior of the simplex. 
For $\beta > 1$, the process converges almost surely to one of the corners of the simplex.
In this case that a single type dominates was proved by Khanin and Khanin \cite{khanin2001} following on from the two-type case which can be covered using Rubin's Theorem in Davis \cite{davis_rrw}.

For a two-colour urn model $d=2$ and symmetric interaction,
\[   A = 
 \left( 
 \begin{matrix}
   1 & a \\
   a & 1 
 \end{matrix}
\right), \quad a>0, \]
it was proved by the first author in Theorem 2.2.1, \cite{MCthesis}, that there was a phase transition as follows.
\begin{align}
  & (i) \quad  \text { if } \quad  \left(\tfrac{1-a}{1+a}\right) \beta \leq 1, \quad \text{then} \quad  \bar{x}(n) \rightarrow (\tfrac{1}{2},\tfrac{1}{2}) \quad a.s. \nonumber\\
  & (ii) \quad \text {if } \quad  \left(\tfrac{1-a}{1+a}\right) \beta  > 1,   \quad \text{then} \quad  \bar{x}(n) \rightarrow  \Psi \quad a.s., \nonumber
\end{align}
where $\Psi$ is a random vector supported on $\left\{ \left( \frac{1}{1+r}, \frac{r}{1+r} \right), \left( \frac{r}{1+r}, \frac{1}{1+r}\right)\right\}$
and $r:= r(a, \beta)$ is the unique root in $(0,1)$ of $\mathcal{P}_{a,\beta}(z) = az^{\beta+1}- z^{\beta}+ z -a = 0.$
In case (\emph{ii}),  $\pr[\bar{x}(n) \rightarrow (\frac{1}{2},\frac{1}{2})] = 0.$
Note that for $\beta=1$, the process $(u(n))_{n \geq 0}$ is a Friedman's urn model and statement (\emph{i}) yields $u(n)/(u_1(n)+u_2(n)) \rightarrow (\frac{1}{2}, \frac{1}{2})$ $a.s.$ as expected.

Some similar results appear in Laruelle and Pag\`{e}s \cite{laruelle2019}.  The definition of the model there allows for a more general \textit{skewing function} than $x^{\beta}$ as above, but in the $d=2$ case gives phase transitions similar to those in \cite{MCthesis}, and relates them to the eigenvalues of the \textit{generating matrix} $H$, which is related to our matrix $A$.  Furthermore they show that, for any $d$, for a concave skewing function (corresponding to $\beta<1$ in our model) and a bi-stochastic generating matrix there is almost sure convergence to the centre of the simplex, and they give conditions, related to the eigenvalues of the generating matrix, under which for a convex skewing function (our $\beta>1$) there is probability zero of converging to the centre of the simplex.

In this paper, we follow up on the results for $d=2$ in \cite{MCthesis} and those in \cite{laruelle2019}, with the aim to generalise from $d=2$ to larger values of $d$ and to see whether more types of behaviour emerge when this is done.  We show that this is indeed the case when $d=3$, where we consider two particular choices of $A$.  Our results can be seen as extensions of those of \cite{laruelle2019} in certain specific cases.

First of all, we consider a choice of $A$ with a symmetric interaction of the same strength $a$ for each pair of colours.    The following theorem shows that in this system there are three phases, as opposed to two when $d=2$; there are phases where there is almost sure convergence to a symmetric limit and where there is almost sure convergence to one of a number of asymmetric limits, which are analogues of the phases when $d=2$, but there is also an intermediate phase where both symmetric and asymmetric limits are possible.

\renewcommand{\labelenumi}{(\roman{enumi})}
\begin{thm}\label{sym3main}
Let $A$ be the matrix
\begin{equation}
  \begin{pmatrix} 1 & a & a \\ a & 1 & a \\ a & a & 1 \end{pmatrix}, \quad a > 0.
\end{equation}\begin{enumerate}
\item Fix $a<1$.  Then there exists $\beta_1(a)$ satisfying $1<\beta_1(a)<\frac{1+2a}{1-a}$, with $\beta_1(a)$ an increasing function of $a$ satisfying $\beta_1(a)\to\infty$ as $a\to 1$, and we have the following three phases.
\begin{enumerate}
\item \label{symlim} \emph{Symmetric limit almost surely.} If $\beta<\beta_1(a)$, then almost surely $\bar{x}(n) \to \left(\frac13,\frac13,\frac13\right)$.
\item \label{bothlim} \emph{Symmetric or asymmetric limit.} If $\beta_1(a)<\beta<\frac{1+2a}{1-a}$ then there exists $r_2>1$ such that almost surely $\bar{x}(n)$ converges to one of the four points in $\Delta^2$ given by $\left(\frac13,\frac13,\frac13\right)$, $\left(\frac{r_{2}}{2+r_{2}},\frac{1}{2+r_{2}},\frac{1}{2+r_{2}}\right)$,  $\left(\frac{1}{2+r_{2}},\frac{r_{2}}{2+r_{2}},\frac{1}{2+r_{2}}\right)$ and  $\left(\frac{1}{2+r_{2}},\frac{1}{2+r_{2}},\frac{r_{2}}{2+r_{2}}\right)$.  All of these points have positive probability of being limits.
\item \label{asymlim} \emph{Asymmetric limit almost surely.} If $\beta>\frac{1+2a}{1-a}$ then there exists $r_{+}>1$ such that almost surely $\bar{x}(n)$ converges to one of the three points in $\Delta^2$ given by $\left(\frac{r_{+}}{2+r_{+}},\frac{1}{2+r_{+}},\frac{1}{2+r_{+}}\right)$,  $\left(\frac{1}{2+r_{+}},\frac{r_{+}}{2+r_{+}},\frac{1}{2+r_{+}}\right)$ and  $\left(\frac{1}{2+r_{+}},\frac{1}{2+r_{+}},\frac{r_{+}}{2+r_{+}}\right)$.
\end{enumerate}
\item Fix $a\geq 1$.  Then almost surely $\bar{x}(n) \to \left(\frac13,\frac13,\frac13\right)$.
\end{enumerate}
\end{thm}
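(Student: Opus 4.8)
The plan is to treat $\bar{x}(n)$ as a stochastic approximation. Since $u(n)=Ax^\beta(n)$ is homogeneous of degree $\beta$, the conditional drift depends only on $\bar{x}(n)$: writing $p_i(v)=(Av^\beta)_i/(\mathbf{1}^{\top}Av^\beta)$ and using $\mathbf{1}^{\top}A=(1+2a)\mathbf{1}^{\top}$,
\[
p_i(v)=\frac{a}{1+2a}+\frac{1-a}{1+2a}\,\frac{v_i^\beta}{\sum_j v_j^\beta},\qquad \E[\bar{x}(n+1)-\bar{x}(n)\mid\salj_n]=\frac{p(\bar{x}(n))-\bar{x}(n)}{n+n_0+1},
\]
so $\bar{x}$ is a Robbins--Monro scheme with mean field $F(v)=p(v)-v$ and associated ODE $\dot v=F(v)$ on $\Delta^2$. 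On each edge $\{v_i=0\}$ one has $p_i=\tfrac{a}{1+2a}>0$, so $F$ points strictly inward and the boundary is repelling. The structural linchpin is a strict Lyapunov function: setting
\[
V(v)=\frac{1-a}{(1+2a)\beta}\log\Big(\sum_j v_j^\beta\Big)+\frac{a}{1+2a}\sum_j\log v_j,
\]
a direct computation gives $\partial V/\partial v_i=p_i(v)/v_i$, whence $\dot V=\sum_i p_i^2/v_i-1\ge 0$ by Cauchy--Schwarz (using $\sum_i v_i=\sum_i p_i=1$), with equality exactly at the equilibria $p(v)=v$. Thus the flow is gradient-like, $V\to-\infty$ at the boundary, and its only internally chain-transitive sets are isolated equilibria.

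Next I would classify the equilibria. Writing $w_i=v_i^\beta/\sum_j v_j^\beta$, the equation $p_i(v)=v_i$ reads $v_i=\rho(w_i)$ with $\rho(t)=\frac{a+(1-a)t}{1+2a}$, together with $\rho(w_i)^\beta=w_i\sum_j v_j^\beta$; hence every $w_i$ solves $\Theta(w):=\rho(w)^\beta/w=\sum_j v_j^\beta$. Since $\log\Theta$ has at most one interior critical point on $(0,1)$, the equation $\Theta(w)=\text{const}$ has at most two solutions, so at any equilibrium the coordinates take at most two distinct values. Consequently every equilibrium lies on a symmetry axis and has the form $\big(\tfrac{r}{2+r},\tfrac1{2+r},\tfrac1{2+r}\big)$ up to permutation. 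Substituting, the symmetric point $r=1$ is always an equilibrium, and an asymmetric equilibrium $r\neq1$ exists precisely when
\[
\frac{1+2a}{1-a}=\Phi_\beta(r):=\frac{(r^\beta-1)(r+2)}{(r^\beta+2)(r-1)}.
\]

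I would then analyse $\Phi_\beta$. One checks $\Phi_\beta(r)\to1$ as $r\to0^+$ and $r\to\infty$, $\lim_{r\to1}\Phi_\beta(r)=\beta$, and $\Phi_\beta(1+\epsilon)=\beta+\tfrac{\beta(\beta-1)}6\epsilon+o(\epsilon)$, so for $\beta>1$ the curve rises above $\beta$ just past $r=1$; the key lemma is that $\Phi_\beta$ increases from $1$ to $\beta$ on $(0,1)$ and is unimodal on $(1,\infty)$ with a single maximum $\Phi_{\max}(\beta)>\beta$ that is increasing in $\beta$ and tends to $\infty$. Defining $\beta_1(a)$ by $\Phi_{\max}(\beta_1(a))=\tfrac{1+2a}{1-a}$ then gives $1<\beta_1(a)<\tfrac{1+2a}{1-a}$, increasing in $a$ (since $\tfrac{1+2a}{1-a}$ is increasing in $a$ while $\Phi_{\max}$ is $a$-free) and diverging as $a\to1$. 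Counting intersections with the level $\tfrac{1+2a}{1-a}$ yields no asymmetric equilibrium for $\beta<\beta_1(a)$; two (an inner root and an outer root $r_2>1$) for $\beta_1(a)<\beta<\tfrac{1+2a}{1-a}$; and one outer root $r_+>1$ together with one root in $(0,1)$ for $\beta>\tfrac{1+2a}{1-a}$. For stability, the Jacobian of $F$ at the centre equals $\big(\tfrac{(1-a)\beta}{1+2a}-1\big)I$ on the tangent space, so the centre is a sink for $\beta<\tfrac{1+2a}{1-a}$ and a source for $\beta>\tfrac{1+2a}{1-a}$; restricting $F$ to each axis gives a one-dimensional gradient field whose equilibria alternate in stability, and a Poincar\'e--Hopf index count on the triangle (boundary inward-pointing, indices summing to $1$) forces the outer roots $r_2,r_+$ to be sinks and the inner and $(0,1)$ roots to be saddles. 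Feeding this into the standard stochastic-approximation dichotomy — almost sure non-convergence to linearly unstable equilibria, the interior noise being non-degenerate in the unstable directions, and positive probability of convergence to each sink — together with the Lyapunov structure (which forces the limit set to be a single equilibrium) gives almost sure convergence to a sink, each attained with positive probability; this yields the three phases of part (i). Part (ii) follows since for $a\ge1$ one has $\tfrac{(1-a)\beta}{1+2a}-1<0$ and $\Theta$ monotone, so the centre is the unique, globally attracting equilibrium.

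The stochastic-approximation input above is standard once the drift, the inward boundary behaviour and the Lyapunov function are in place. The main obstacle is the bifurcation analysis of $\Phi_\beta$: proving its unimodality on $(1,\infty)$ and the monotonicity and divergence of $\Phi_{\max}(\beta)$, since these are exactly what pin down the three distinct root-counts and hence the existence and the stated properties of the intermediate threshold $\beta_1(a)$. A secondary point requiring care is the transverse stability type of the asymmetric equilibria; I expect $V$ together with the index count to settle this without a direct two-dimensional eigenvalue computation, but checking that the claimed sinks are strict local maxima of $V$ while the saddles are not is where the remaining detailed work lies.
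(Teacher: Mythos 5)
Your stochastic-approximation skeleton, Lyapunov function, and equilibrium classification are all sound, and in places cleaner than the paper's own argument: your $V$ agrees (up to an additive constant on the simplex) with the paper's $L$ in \eqref{eq:lyapunov}; your two-distinct-values argument via $\Theta(w)=\rho(w)^\beta/w$ is a slicker route to Proposition \ref{eqcoord}; and your bifurcation equation $\frac{1+2a}{1-a}=\Phi_\beta(r)$ is algebraically equivalent to the paper's $\mathcal{P}_{a,\beta}(r)=0$ (it even reproduces the paper's critical value $a=\frac{2\sqrt{2}-1}{7}$ at $\beta=2$). However, there are two genuine gaps. First, your ``key lemma'' --- unimodality of $\Phi_\beta$ on $(1,\infty)$ --- is asserted, not proved, and it is precisely the mathematical content that creates the three-phase structure; the paper establishes the equivalent root count by noting that $\mathcal{P}_{a,\beta}''$ changes sign exactly once (concave then convex), so $\mathcal{P}_{a,\beta}$ has at most three positive roots, and by the sign of $\partial_\beta\mathcal{P}_{a,\beta}(z)=(az-1)(\log z)z^\beta$ (Proposition \ref{Proots}). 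To be fair, once unimodality is granted, the rest of your $\Phi_{\max}$ analysis is easy: $\partial_\beta\Phi_\beta(r)>0$ for $r\neq 1$ gives monotonicity of $\Phi_{\max}$, and $\Phi_{\max}(\beta)\geq\beta$ gives divergence.

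The second gap is not merely deferred work but a step that would fail as proposed: the Poincar\'e--Hopf index count cannot identify which asymmetric equilibria are sinks. In case (b) the equilibria are the centre (a sink), three inner points $r_1$ and three outer points $r_2$, each repelling resp.\ attracting along its symmetry axis; both assignments ``inner saddles, outer sinks'' ($1-3+3=1$) and ``inner sources, outer saddles'' ($1+3-3=1$) satisfy the index constraint, and the analogous ambiguity occurs in case (c) with $r_-$ and $r_+$. The along-axis alternation and the existence of a sink at the global maximum of $V$ do not break this tie either, since the ordering of $V$ between the centre (or $r_-$) and the outer points is exactly what is unknown. So the transverse linearization is unavoidable: the paper does it by the perturbation computation in \eqref{estability}, yielding the criterion $\frac{r^{\beta}+2}{r+2}>\frac{\beta(1-a)}{2a+1}$ of Proposition \ref{stable}. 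Note also that your fallback --- checking that the claimed sinks are strict local maxima of $V$ --- is the same computation in disguise, since at an equilibrium $\mathrm{Hess}\,V=\mathrm{diag}(1/v_i)\,DF$, so there is no way around computing the transverse eigenvalue. The remaining probabilistic ingredients you cite (non-convergence to linearly unstable points via Pemantle's criterion with a noise non-degeneracy check, positive probability of convergence to attainable attractors via Bena\"{i}m's Theorem 7.3, and the limit set being a single equilibrium) match the paper's Lemma \ref{point} and Proposition \ref{convergence} and are fine as stated.
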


Theorem \ref{sym3main} presents the results in terms of phase transitions in $\beta$ with $a$ fixed.  However, because both $\beta_1(a)$ and $\frac{1+2a}{1-a}$ are increasing functions of $a$ which converge to $1$ as $a\to 0$ and to $\infty$ as $a\to 1$, it is also possible to see them as phase transitions in $a$ with $\beta>1$ fixed: if $a<\frac{\beta-1}{2+\beta}$ then we will be in case (c), if $\frac{\beta-1}{2+\beta}<a<\beta_1^{-1}(\beta)$ then we will be in case (b), and if $a>\beta_1^{-1}(\beta)$ we will be in case (a).

Theorem \ref{sym3main} can be seen as an extension of the results of Proposition 2.15 of Laruelle and Pag\`{e}s \cite{laruelle2019} in the case where the matrix $H=\frac{1}{1+2a}\begin{pmatrix}1 & a & a \\ a & 1 & a \\ a & a & 1\end{pmatrix}$ in that their result shows the non-convergence to $(\frac13,\frac13,\frac13)$ in the case (i)(c), as the second largest eigenvalue of $H$ is $\frac{1-a}{1+2a}$.  We can also state the condition $\beta>\frac{1+2a}{1-a}$ in terms of the eigenvalues of $A$: the right hand side can be seen as the ratio of the two largest eigenvalues.  It might be reasonable to conjecture that an extension to $d>3$ of Theorem \ref{sym3main} might involve a similar condition on the eigenvalues; however we note that the other phase transition does not appear to be related to the eigenvalues of $A$.  We discuss the question of what happens with $d>3$ further in Section \ref{gthree}.

We also consider a system where each colour is reinforced by itself and by one other, in a cyclic way.  For this system, the following theorem shows the existence of a phase transition between a phase with convergence with positive probability to a symmetric limit and a phase where there is no convergence to a limit and there is cycling behaviour.

\begin{thm}\label{cyclicmain}Let $A$ be the matrix
$$\begin{pmatrix} 1 & 1 & 0 \\ 0 & 1 & 1 \\ 1 & 0 & 1 \end{pmatrix}.$$
\begin{itemize} \item When $\beta<4$ there is positive probability that $\bar{X}(n)\to (1/3,1/3,1/3)$. \item When $\beta>4$, almost surely $\bar{X}(n)$ fails to converge, and the limit set is either a periodic orbit or a connected union of periodic orbits.\end{itemize}\end{thm}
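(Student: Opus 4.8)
The plan is to realise the proportion process as a stochastic approximation and to read both phases off the linearisation of the associated mean-field vector field at the centre of the simplex, where a Hopf-type bifurcation occurs exactly at $\beta=4$. Since the right-hand side of \eqref{eq:transition} is invariant under the scaling $x\mapsto\lambda x$, substituting $x(n)=(n+n_0)\bar{x}(n)$ shows that the conditional law of the increment depends on the state only through the proportions $\bar{x}(n)$, via the degree-zero map $F:\Delta^2\to\Delta^2$ given by
\begin{equation}
F_i(x)=\frac{(Ax^{\beta})_i}{\sum_{j=1}^{3}(Ax^{\beta})_j}=\frac{x_i^{\beta}+x_{i+1}^{\beta}}{2\bigl(x_1^{\beta}+x_2^{\beta}+x_3^{\beta}\bigr)},\nonumber
\end{equation}
where we used that every column of $A$ sums to $2$ and indices are read cyclically. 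With $\gamma_n=(n+n_0+1)^{-1}$ one then has $\bar{x}(n+1)-\bar{x}(n)=\gamma_n\bigl(F(\bar{x}(n))-\bar{x}(n)+\Delta M_{n+1}\bigr)$, where $\Delta M_{n+1}$ is a bounded $\salj_n$-martingale difference. As $\sum_n\gamma_n=\infty$ and $\sum_n\gamma_n^2<\infty$, this is a Robbins--Monro scheme with mean-field ODE $\dot{x}=G(x):=F(x)-x$, so by Bena\"im's theory the limit set $L$ of $\bar{x}(n)$ is almost surely a compact connected internally chain transitive (ICT) set for the flow of $G$ on $\Delta^2$.

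Next I would linearise at the centre $c=(\tfrac13,\tfrac13,\tfrac13)$, the evident interior equilibrium of $G$. Differentiating $F$ gives $DF(c)=\tfrac{\beta}{2}A-\tfrac{\beta}{3}J$ with $J$ the all-ones matrix, so on the tangent space $T=\{v:\sum_i v_i=0\}$ of the simplex, where $Jv=0$, we have $DG(c)\big|_{T}=\tfrac{\beta}{2}A-I$. The matrix $A$ is circulant with eigenvalues $1+\omega^{k}$, $\omega=\rme^{2\pi\mathrm{i}/3}$; the value $2$ (for $k=0$) corresponds to the normal direction $(1,1,1)$, while the conjugate pair $\tfrac12\pm\mathrm{i}\tfrac{\sqrt3}{2}$ lies in $T$. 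Hence the eigenvalues of $DG(c)|_{T}$ are
\begin{equation}
\mu_{\pm}=\frac{\beta}{2}\Bigl(\frac12\pm\mathrm{i}\frac{\sqrt3}{2}\Bigr)-1=\Bigl(\frac{\beta}{4}-1\Bigr)\pm\mathrm{i}\,\frac{\sqrt3\,\beta}{4},\nonumber
\end{equation}
whose real part changes sign exactly at $\beta=4$: the centre is a stable focus for $\beta<4$ and an unstable focus for $\beta>4$, which is the source of the stated transition.

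The two phases then follow from the standard stochastic-approximation dichotomy together with a global analysis of $G$. For $\beta<4$ the centre is a linearly stable equilibrium, hence an attractor of the flow; since the martingale noise is nondegenerate on $T$ the scheme reaches any neighbourhood of $c$ with positive probability, and the standard fact that a Robbins--Monro scheme converges to an attractor with positive probability once it enters its basin yields $\pr[\bar{x}(n)\to c]>0$. For $\beta>4$ the centre is linearly unstable and, as $\Delta M_{n+1}$ has conditional covariance bounded below on $T$ near $c$ (each of the three colours is chosen with probability close to $\tfrac13$), Pemantle's non-convergence theorem gives $\pr[\bar{x}(n)\to c]=0$. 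To identify $L$ I would analyse $G$ on all of $\Delta^2$: one checks that $c$ is the unique interior zero of $G$, and that on each boundary edge the coordinate of the absent colour has strictly positive drift, so the flow points inward and $\partial\Delta^2$ is itself a repelling periodic orbit of $G$ carrying no equilibria. Consequently there is a forward-invariant annulus $R\subset\Delta^2$ absorbing every interior trajectory except $c$. Since a repelling equilibrium cannot belong to any larger ICT set while $L\neq\{c\}$ by Pemantle, we get $c\notin L$; a drift estimate showing repulsion from $\partial\Delta^2$ confines $L$ to $R$. Thus $L$ is an ICT set in $R$ meeting no equilibrium, and the planar ICT classification (via Poincar\'e--Bendixson) forces it to be a single periodic orbit or a connected union of periodic orbits; in particular $\bar{x}(n)$ does not converge.

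The eigenvalue computation is immediate once the circulant structure is spotted, so the main obstacle is the global phase-plane analysis behind the $\beta>4$ case: proving that $c$ is the only interior equilibrium of $G$ (so that $L$ can contain no equilibrium), verifying the inward drift on $\partial\Delta^2$ to produce the trapping annulus, and establishing the repulsion estimate that keeps $L$ away from the boundary. Ruling out other interior equilibria for all $\beta>4$ and excluding the boundary orbit from the limit set are where the genuine work lies; the passage from ``does not converge to $c$'' to ``periodic orbit or connected union of periodic orbits'' then follows from the planar ICT classification.
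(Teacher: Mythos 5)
Your proposal follows essentially the same route as the paper's proof: the stochastic-approximation / chain-transitive-set framework of Bena\"{i}m, linearisation at $\left(\frac13,\frac13,\frac13\right)$ giving the eigenvalues $\left(\frac{\beta}{4}-1\right)\pm i\frac{\sqrt3\,\beta}{4}$ (the paper obtains these from the $2\times2$ Jacobian in coordinates $(x_1,x_2)$ with $x_3=1-x_1-x_2$, you from the circulant structure of $A$; the computations are equivalent), attainability plus Bena\"{i}m's Theorem 7.3 for the $\beta<4$ phase, and Pemantle non-convergence plus the planar classification of internally chain transitive sets for $\beta>4$.

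There is, however, one genuine gap. The whole $\beta>4$ conclusion rests on knowing that $\left(\frac13,\frac13,\frac13\right)$ is the \emph{only} zero of the vector field in $\Delta^2$ (and that it is never a saddle): only then does the planar classification (Bena\"{i}m's Theorem 6.12: connected unions of equilibria, periodic orbits and cyclic orbit chains) collapse, after Pemantle, to ``periodic orbit or connected union of periodic orbits''. You state this uniqueness as ``one checks that $c$ is the unique interior zero of $G$'' and concede in your closing paragraph that proving it is ``where the genuine work lies'' --- so it is asserted, not proved, and for all $\beta$ simultaneously it is not obvious. The paper supplies it (Lemma 4.2) with a short sign argument: at an interior equilibrium one derives $x_1-x_2=x_2\frac{x_1^{\beta}-x_3^{\beta}}{x_2^{\beta}+x_3^{\beta}}$ and its cyclic analogues, and the resulting sign implications (if $x_1>x_2$ then $x_1>x_3$, etc.) are mutually inconsistent unless $x_1=x_2=x_3$; boundary equilibria are excluded because $x_i=0$ forces $x_{i+1}=0$ cyclically. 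This is the one missing ingredient, and it is fillable by exactly this computation. A smaller point: your claim that $\partial\Delta^2$ is ``itself a repelling periodic orbit of $G$'' is false --- on the edge $x_3=0$ with $x_1>0$ one has $G_3>0$, so the boundary is not invariant and is not an orbit at all. What you actually need (and what is true) is only that the flow points strictly inward there and that there are no boundary equilibria, so no invariant set --- in particular no chain transitive limit set --- can meet $\partial\Delta^2$; this slip does not damage the structure of your argument. On the credit side, your explicit remark that a repelling equilibrium cannot belong to any strictly larger internally chain transitive set (so that $c\notin L$ once $\pr[L=\{c\}]=0$) is a point the paper passes over more quickly than you do.
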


In Section \ref{stochapprox} we discuss the stochastic approximation methods we use in the proofs, while the proofs themselves are in Section \ref{symmetric} for Theorem \ref{sym3main} and Section \ref{cyclic} for Theorem \ref{cyclicmain}.  In the final Section \ref{ex_sim}, we illustrate the results with some examples and simulations, including some examples beyond those covered by Theorems \ref{sym3main} and \ref{cyclicmain}.

\section{Stochastic approximation approach} \label{stochapprox}

In this section we introduce some of the stochastic approximation ideas which appear in our proofs.

The type of stochastic approximation process we will be interested in is a Robbins-Monro algorithm in $\R^d$, following section 4.2 of Bena\"{\i}m \cite{benaim}.  This is a stochastic process $(y(n))_{n\in \N}$, with natural filtration $(\salj_n)$, taking values in $\R^d$ which satisfies 
\begin{equation}
  \label{eq:SAAgeneral}
  y(n+1) - y(n) = \gamma_n(F(y(n)) + \xi_{n+1}),
\end{equation}
where $F: \R^d \to \R^d$ is a deterministic vector field, $\gamma_n$ is a step size satisfying certain conditions, and $\E(\xi_{n+1}|\salj_n)=0$ with $\xi_n$ being $\salj_n$-measurable.  For most results it is required that $\gamma_n\to 0$ as $n\to\infty$ but that $\sum_{n=1}^{\infty}\gamma_n=\infty$, and some conditions are also needed on the noise term $\xi_{n+1}$, typically that it is bounded in $L^q$ for some $q$ which depends on the $\gamma_n$: see for example Proposition 4.2 of Bena\"{\i}m \cite{benaim}.

The above sequence can be thought as a numerical approximating method with varying step size $\gamma_n$ for solving the ODE $dx/dt = F(x)$.  Under the conditions discussed above, the asymptotic behavior of $(\bar{x}_n)_{n \in \N}$ and the underlying ODE are closely connected: as described in \cite{benaim}, define an interpolated version $(X(t))_{t \geq 0}$ by defining $\tau_0=0$ and for $n\in \N$ $\tau_n=\sum_{i=1}^n \gamma_i$, then defining $X(\tau_n+s)=\bar{x}_n+\frac{\bar{x}_{n+1}-\bar{x_n}}{\gamma_{n+1}}$ for $0\leq s < \gamma_{n+1}$.  Then the interpolated process $(X(t))_{t \geq 0}$ is an \emph{asymptotic pseudotrajectory} for the ODE.
This is called the \emph{ODE method} or \emph{the dynamical system approach}, which alongside some probabilistic techniques, is applied to examine almost sure dynamics of stochastic approximation processes.

Let $\Phi: \R_+ \times \R^d \to \R^d$ be the semiflow induced by the vector field $F$, so that $(\Phi(t,x))_{t\geq 0})$ is the trajectory of $F$ started at $x$.  A useful situation for analysis of stochastic approximation processes is where there is a \emph{Lyapunov function} for the vector field $F$: a function $V:\R^d \to \R^d$ such that on a trajectory $(\Phi(t,x))_{t\geq 0}$ of the vector field, $V(\Phi(t,x))$ is strictly decreasing in $t$ except where $\Phi(t,x)$ is a stationary point of $F$.  If this holds, then under mild conditions then the Robbins-Monro algorithm will converge almost surely to a (possibly random) limit, which will be a stationary point of $F$.  See section 6.2 of Bena\"{\i}m \cite{benaim}, and in particular Corollary 6.6 therein.

We now show that our process $(\bar{x}_n)$ can be put into Robbins-Monro form, and that the assumptions of Proposition 4.2 of Bena\"{\i}m \cite{benaim} are satisfied, allowing the theory of asymptotic pseudotrajectories to be used.  For a general matrix $A$ and a given configuration of balls $x(n)$ at time $n$, let $i_{n+1} \in \{1,\ldots,d\}$ be the random colour of the ball to be added in the urn at time $n+1$.
Then, note that
\begin{align}
  \bar{x}(n+1) & = \frac{x(n) + {\bf e}_{i_{n+1}}}{n_0 + n+1} = \frac{(n_0+n )\bar{x}(n) + {\bf e}_{i_{n+1}}}{n_0+n+1} \nonumber \\ 
  & = \left( 1 -\frac{1}{n_0+n+1} \right)\bar{x}(n) + \frac{{\bf e}_{i_{n+1}}}{n_0+n+1},
\end{align}
implying
\begin{equation}
  \label{eq:SAA}
\bar{x}(n+1) - \bar{x}(n) = \frac{1}{n_0+n+1}({\bf e}_{i_{n+1}} - \bar{x}(n)). 
\end{equation}
To put \eqref{eq:SAA} into Robbins-Monro form, we rearrange the right-hand side 
into a deterministic part and a zero mean ``noise''.
More specifically, let
\begin{equation}
  \label{eq:VFF}
  F(\bar{x}(n)) := \E[ {\bf e}_{i_{n+1}} \, | \, \salj_n] - \bar{x}(n),
\end{equation}
and
\begin{equation}
  \label{eq:NOISE}
  \xi_{n+1} := {\bf e}_{i_{n+1}} - \E[ {\bf e}_{i_{n+1}} \, | \, \salj_n].    
\end{equation}  
By setting $\gamma_n = 1/(n_0+n+1)$, we obtain
\begin{equation}
  \label{eq:SAAX}
  \bar{x}(n+1) - \bar{x}(n) = \gamma_n(F(\bar{x}(n)) + \xi_{n+1}).
\end{equation}  We note that the components of $\xi_{n+1}$ are uniformly bounded in modulus by $2$, meaning that the conditions of Proposition 4.2 of Bena\'{\i}m \cite{benaim} are satisfied, meaning that the interpolated process $(X(t))_{t\geq 0}$ is indeed an asymptotic pseudotrajectory of $F$.
%

We can write the components of $F$ as
$$F_i(x)=\frac{u_i}{\sum_{j=1}^d u_j}-x_i, \quad i=1,\ldots,d,$$
where $u_i$ has the same relationship to $x$ as $u_i(n)$ to $x(n)$.

For a stochastic approximation heuristic, and to apply some of the results we use, it is necessary to classify the stationary points, for which we use the following terminology.
\begin{definition}\label{stable_defs}Consider a stationary point $p$ of $F$.  We will say that $p$ is \emph{stable} if it is an attractor for the vector field  $F$, meaning that there exists a neighbourhood $W$ of $p$ where for $x\in W$ the trajectory $(\Phi(t,x))_{t\geq 0}$ satisfies $\Phi(x,t)\to p$ as $t\to \infty$, uniformly in $W$.  Furthermore, if all eigenvalues of $DF(p)$ have negative real part, $p$ is said to be \emph{linearly stable}, while if some eigenvalue has positive real part, $p$ is said to be \emph{linearly unstable}.  If all eigenvalues have positive real part, then $p$ is said to be a \emph{source}.\end{definition}

Typically convergence happens with positive probability to a stable stationary point but with probability zero to a linearly unstable one.  For the former, Theorem 7.3 of Bena\"{\i}m \cite{benaim}, which holds under the assumptions on the noise of Proposition 4.2, shows that a stable stationary point has positive probability of being a limit as long as its basin of attraction $W$ contains points which are \emph{attainable} in the sense that the process has positive probability of being indefinitely close to them at indefinitely large times, a condition which is usually satisfied for all points in the simplex for urn type processes under mild conditions; see Example 7.2 of \cite{benaim}.

To show that linearly unstable stationary points have zero probability of being limits it is usually necessary to check that there is expectation bounded away from zero of the positive part of the component of the noise in any given direction; see Pemantle \cite{pemantle1990}, Theorem 1.  This condition ensures that the process has zero probability of being trapped on a stable manifold and converging to the unstable point.  Theorem 1 of \cite{pemantle1990} also requires the noise term $\xi_{n+1}$ (in our notation) to be uniformly bounded; as noted above this is satisfied in our process.

\section{Proofs for the symmetric case}\label{symmetric}

In this section we prove Theorem \ref{sym3main}.

Throughout this section we let $A$ be the matrix
\begin{equation}
  \label{eq:matrix}
  \begin{pmatrix} 1 & a & a \\ a & 1 & a \\ a & a & 1 \end{pmatrix}, \quad a > 0.
\end{equation}

In this case the vector field $F$ is given by
\begin{eqnarray}F_1(x_1,x_2,x_3) &=& \frac{x_1^{\beta}+ax_2^{\beta}+ax_3^{\beta}}{(1+2a)(x_1^{\beta}+x_2^{\beta}+x_3^{\beta})}-x_1 \label{eq:1}\\
F_2(x_1,x_2,x_3) &=& \frac{ax_1^{\beta}+x_2^{\beta}+ax_3^{\beta}}{(1+2a)(x_1^{\beta}+x_2^{\beta}+x_3^{\beta})}-x_2 \label{eq:2}\\
F_3(x_1,x_2,x_3) &=& \frac{ax_1^{\beta}+ax_2^{\beta}+x_3^{\beta}}{(1+2a)(x_1^{\beta}+x_2^{\beta}+x_3^{\beta})}-x_3 \label{eq:3}\end{eqnarray}

The stochastic approximation approach indicates that the possible limits of our process will be stationary points of $F$, so we start by identifying these.  Noting that the lines $x_1=x_2$, $x_1=x_3$ and $x_2=x_3$ are each invariant under $F$, define the function
\begin{equation}
  \label{eq:pol}
\mathcal{P}_{a,\beta}(z) = az^{\beta+1} - z^{\beta} + (1+a)z -2a,  
\end{equation}
which we will see is related to the dynamics restricted to one of these lines. The following result shows that all stationary points of $F$ are located on at least one of these lines and expresses them in terms of solutions to $\mathcal{P}_{a,\beta}(z)=0$.

\begin{prop}\label{eqcoord}
All stationary points $(x_1,x_2,x_3)$ of $F$ have at least two of $x_1, x_2, x_3$ equal, and are of one of the forms $(\frac{r}{r+2},\frac{1}{r+2},\frac{1}{r+2})$, $(\frac{1}{r+2},\frac{r}{r+2},\frac{1}{r+2})$ or $(\frac{1}{r+2},\frac{1}{r+2},\frac{r}{r+2})$, with  $r$ a solution of $\mathcal{P}_{a,\beta}(z)=0$ in $\R^+$.

Furthermore, there are at most three possible values of $r$, one of which is always $1$, corresponding to the stationary point $(\frac13,\frac13,\frac13)$.
\end{prop}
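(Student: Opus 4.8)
The plan is to convert the stationarity equations $F_i(x)=0$ into a single scalar relation per coordinate, use the symmetry of $A$ to rule out three distinct coordinates, and then read off both the explicit form of the equilibria and the polynomial $\mathcal{P}_{a,\beta}$. \textbf{Setting up.} First I would note that each column of $A$ sums to $1+2a$, so $\sum_j u_j=(1+2a)S$ with $S:=x_1^\beta+x_2^\beta+x_3^\beta$, and that $u_i=x_i^\beta+a\sum_{j\ne i}x_j^\beta=(1-a)x_i^\beta+aS$. Hence $F_i(x)=0$ is equivalent to
\[ (1-a)x_i^\beta = S\big[(1+2a)x_i-a\big], \qquad i=1,2,3. \]
One checks directly that on the face $x_i=0$ one has $F_i=\tfrac{a}{1+2a}>0$, so every stationary point lies in the interior and all coordinates are strictly positive; this is what justifies the divisions below.

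\textbf{At least two coordinates coincide.} Subtracting the equations for indices $i$ and $j$ gives
\[ (1-a)(x_i^\beta-x_j^\beta)=(1+2a)\,S\,(x_i-x_j). \]
Since $t\mapsto t^\beta$ is strictly increasing on $(0,\infty)$, any pair with $x_i\ne x_j$ has strictly positive secant slope $(x_i^\beta-x_j^\beta)/(x_i-x_j)$, equal to the constant $(1+2a)S/(1-a)$ which is the same for every pair. If $a\ge 1$ this constant is non-positive, a contradiction, so all three coordinates must be equal. If $a<1$, assuming all three distinct would force the three pairwise secant slopes of the power function to agree; but $t\mapsto t^\beta$ is strictly convex for $\beta>1$ and strictly concave for $0<\beta<1$, so its secant slopes over nested intervals are strictly monotone and cannot all coincide, while for $\beta=1$ every secant slope equals $1\ne(1+2a)/(1-a)$. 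In every case at least two of $x_1,x_2,x_3$ are equal.

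\textbf{Form and polynomial.} By symmetry I may take $x_2=x_3$; setting $r:=x_1/x_2>0$ and using $x_1+2x_2=1$ yields exactly $x_1=\tfrac{r}{r+2}$, $x_2=x_3=\tfrac{1}{r+2}$. Because $F_1+F_2+F_3=0$ on the simplex and $F_2=F_3$ on this line, it suffices to impose $F_1=0$. Substituting the parametrisation into $(1-a)x_1^\beta=S[(1+2a)x_1-a]$ and clearing the common factor $(r+2)^{-\beta-1}$ collapses, after routine algebra, to $ar^{\beta+1}-r^\beta+(1+a)r-2a=0$, i.e. $\mathcal{P}_{a,\beta}(r)=0$; and $\mathcal{P}_{a,\beta}(1)=a-1+(1+a)-2a=0$, so $r=1$ (the centre) is always a root.

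\textbf{Counting the roots.} Finally I would bound the number of positive roots of $\mathcal{P}_{a,\beta}$. The main obstacle here is that $\mathcal{P}_{a,\beta}$ is not a genuine polynomial when $\beta\notin\N$, so Descartes' rule does not apply verbatim; instead I use calculus. Differentiating twice,
\[ \mathcal{P}_{a,\beta}''(z)=\beta z^{\beta-2}\big[a(\beta+1)z-(\beta-1)\big], \]
which for $z>0$ changes sign at most once. Hence $\mathcal{P}_{a,\beta}'$ has at most two positive zeros, and by Rolle's theorem $\mathcal{P}_{a,\beta}$ has at most three positive zeros. This gives at most three admissible values of $r$, one of them always the central $r=1$, completing the proof.
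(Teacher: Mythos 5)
Your proof is correct, but the heart of it---showing that every stationary point has two equal coordinates---goes by a genuinely different route than the paper's. The paper writes a stationary point as $(x,rx,sx)$, derives two coupled equations in $r,s$, eliminates $s^{\beta}$ and $s^{\beta+1}$ by a carefully chosen linear combination to express both $s$ and $s^{\beta}$ as rational functions of $r$, and only then concludes that $r=1$, $s=1$ or $\frac{s^{\beta}-1}{s-1}=\frac{r^{\beta}-1}{r-1}$, the last forcing $r=s$. You instead exploit the constant column sums of $A$ to decouple the system into $(1-a)x_i^{\beta}=S\left[(1+2a)x_i-a\right]$, so that pairwise subtraction makes every secant slope $\frac{x_i^{\beta}-x_j^{\beta}}{x_i-x_j}$ equal to one common constant, which strict convexity (for $\beta>1$) or concavity (for $\beta<1$) of $t\mapsto t^{\beta}$ immediately forbids when all three coordinates are distinct. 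Both arguments ultimately rest on monotonicity of difference quotients of the power function, but yours reaches it with far less algebra, and it buys two things the paper glosses over: an explicit exclusion of boundary stationary points (the paper's parametrisation $(x,rx,sx)$ tacitly assumes $x\neq 0$), and explicit treatment of the cases $a\geq 1$ and $\beta=1$. The remaining steps (parametrising the invariant line, collapsing $F_1=0$ to $\mathcal{P}_{a,\beta}(r)=0$, and counting roots via the single sign change of $\mathcal{P}_{a,\beta}''$) match the paper in substance, though your Rolle argument for ``at most three distinct positive roots'' is stated more carefully than the paper's ``one root or three, counting multiplicity.'' One small imprecision: at $a=1$ the constant $\frac{(1+2a)S}{1-a}$ is undefined rather than non-positive; the fix is to use the pre-division identity $0=(1+2a)S(x_i-x_j)$, which gives $x_i=x_j$ directly since $S>0$.
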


\begin{proof}
We start off by showing that any stationary point has at least two co-ordinates equal.  We do this by writing the stationary point in the form $(x,rx,sx)$ and showing that one of $r=1$, $s=1$ or $r=s$ must hold.

Rearranging \eqref{eq:1}, \eqref{eq:2} and \eqref{eq:3} at $(x,rx,sx)$ gives \begin{eqnarray}\label{rex} x &=& \frac{1+a(r^{\beta}+s^{\beta})}{(2a+1)(1+r^{\beta}+s^{\beta})} \\  \label{rey} rx &=& \frac{r^{\beta}+a(1+s^{\beta})}{(2a+1)(1+r^{\beta}+s^{\beta})} \\ \label{rez} sx &=& \frac{s^{\beta}+a(1+s^{\beta})}{(2a+1)(1+r^{\beta}+s^{\beta})}.\end{eqnarray}
It follows that \begin{eqnarray}\label{req} r^{\beta}+a(1+s^{\beta})=r(1+a(r^{\beta}+s^{\beta})) \\ \label{seq} s^{\beta}+a(1+r^{\beta})=s(1+a(r^{\beta}+s^{\beta})).\end{eqnarray}
Take the linear combination $(s+\frac{1}{a})\times$\eqref{req}$-(r+1)\times$\eqref{seq}.  This eliminates $s^{\beta}$ and $s^{\beta+1}$, giving \begin{equation}\label{rearr3}(r^{\beta}+a)s-ar(1+r^{\beta})=(ar^{\beta}+1)s-\left(a(1+r^{\beta})+\frac{r}{a}-\frac{r^{\beta}}{a}+r^{\beta+1}-1\right),\end{equation} which can be rearranged to give \begin{equation}\label{rearr4}s(r^{\beta}-1)(a-1)=(r^{\beta+1}-1)(1-a)+(r^{\beta}-r)\left(a-\frac{1}{a}\right).\end{equation}  Assuming $a\neq 1$, \eqref{rearr4} gives $r=1$ or
\begin{equation}\label{sfromr}s=\frac{a(r^{\beta}+1)(1-r)+r^{\beta}-r}{a(r^{\beta}-1)}=\frac{\mathcal{P}_{a,\beta}(r)-ar^{\beta}+a}{a(1-r^{\beta})}.\end{equation}

Using this form for $s$ in \eqref{seq} gives (if $r\neq 1$) \begin{equation}\label{sbfromr}s^{\beta}=\frac{-ar^{\beta+1}+r^{\beta}+a-r}{a(r-1)}=\frac{\mathcal{P}_{a,\beta}(r)}{a(1-r)}+1.\end{equation}  Combining \eqref{sfromr} and \eqref{sbfromr} tells us that either $r=1$ or $s=1$ or $$\frac{s^{\beta}-1}{s-1}=\frac{r^{\beta}-1}{r-1},$$ and the latter case implies $r=s$.  Hence any stationary point has two co-ordinates equal.

We now assume, without loss of generality, that the stationary point is of the form $(x,x,rx)$ or equivalently $\left(\frac{1}{r+2},\frac{1}{r+2},\frac{r}{r+2}\right)$.  That the stationary point equations for a point of this form imply $\mathcal{P}_{a,\beta}(r)=0$ is easy to check, and it is also easy to check that $\mathcal{P}_{a,\beta}(1)=0$ for any $a,\beta>0$.

The function $\mathcal{P}_{a,\beta}$ satisfies $\mathcal{P}_{a,\beta}(0)<0$ and $\mathcal{P}_{a,\beta}(z)\to \infty$ as $z\to\infty$; furthermore it is concave for $z<\frac{\beta-1}{a(\beta+1)}$ and convex for $z>\frac{\beta-1}{a(\beta+1)}$, which indicates that it has either one root or three in $\R^{+}$, counting multiplicity.  This completes the proof.

\end{proof}

We now show that the process will indeed, almost surely, converge to one of the stationary points identified in Proposition \ref{eqcoord}.

\begin{lem}\label{point}
  The limit set of the process $(\bar{x}(n))_{n\in\N}$, defined in \eqref{eq:proportion} with given matrix \eqref{eq:matrix}, will, almost surely, be a single point which is a stationary point of $F$.
\end{lem}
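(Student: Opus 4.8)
The plan is to apply the ODE method in precisely the gradient-like form quoted from Bena\"im in Section~\ref{stochapprox}. Writing the dynamics as in \eqref{eq:SAAX}, the noise $\xi_{n+1}$ is bounded and $\gamma_n=1/(n_0+n+1)$ satisfies $\sum_n\gamma_n=\infty$ and $\sum_n\gamma_n^2<\infty$, so the standard hypotheses of the stochastic approximation theory hold (these verifications are routine). Consequently, once $F$ is shown to be gradient-like with a strict Lyapunov function whose set of critical values has empty interior, Bena\"im's theorem identifies the limit set $l(\bar x,\omega)$ almost surely with a connected subset of the equilibria $\{F=0\}$. The lemma then follows provided $\{F=0\}$ is finite, since a finite set is totally disconnected and its connected subsets are single points. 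Thus the proof reduces to two tasks: producing the Lyapunov function, and showing there are finitely many stationary points.

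The key step is the Lyapunov function. Since each column of $A$ sums to $1+2a$, we have $\sum_j u_j=(1+2a)P(x)$ with $P(x):=\sum_i x_i^{\beta}$, so \eqref{eq:1}--\eqref{eq:3} become $F_i(x)=\frac{(1-a)x_i^{\beta}}{(1+2a)P(x)}-x_i+\frac{a}{1+2a}$. Because $A$ is symmetric, I would look for a gradient structure in the Shahshahani metric $\langle u,v\rangle_x=\sum_i u_iv_i/x_i$, for which the constrained gradient is $(\mathrm{grad}\,V)_i=x_i\big(\partial_iV-\sum_k x_k\partial_kV\big)$. A direct check shows that
\[ V(x)=-\frac{1-a}{\beta(1+2a)}\log\left(\sum_i x_i^{\beta}\right)-\frac{a}{1+2a}\sum_i\log x_i \]
satisfies $F=-\mathrm{grad}\,V$ on the interior of $\Delta^2$: one has $\partial_iV=-\frac{(1-a)x_i^{\beta-1}}{(1+2a)P}-\frac{a}{(1+2a)x_i}$, and using $\sum_i x_i^{\beta}=P$ and $\sum_i x_i=1$ one computes $\sum_k x_k\partial_kV=-1$, whence $x_i\big(\partial_iV-\sum_k x_k\partial_kV\big)=-F_i$. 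Along the flow this gives
\[ \frac{d}{dt}V(\bar x)=\sum_i \partial_iV\,F_i=-\Var_{x}\!\left((\partial_iV)_i\right)\le 0, \]
with equality if and only if $\partial_iV$ is constant in $i$, i.e. if and only if $F=0$. Hence $V$ is a strict Lyapunov function and $F$ is gradient-like. Since $a>0$, the term $-\frac{a}{1+2a}\sum_i\log x_i\to+\infty$ on $\partial\Delta^2$, so $V$ is proper and bounded below; together with the inward estimate $F_i\to \frac{a}{1+2a}>0$ as $x_i\to0$ this keeps the trajectories in a compact subset of the interior, which also disposes of the fact that $F$ fails to be Lipschitz at the boundary when $\beta<1$.

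For finiteness of $\{F=0\}$, a stationary point $x$ with $P=P(x)$ has each coordinate solving the single-variable equation $(1-a)t^{\beta}-(1+2a)P\,t+aP=0$. When $\beta\neq1$ and $a\neq1$ the map $t\mapsto(1-a)t^{\beta}$ is strictly convex or strictly concave on $(0,\infty)$, so this equation has at most two positive roots; when $\beta=1$ or $a=1$ it is affine with a unique root. In all cases the coordinates of any stationary point take at most two distinct values, and combining this with $\sum_i x_i=1$ and the self-consistency $P=\sum_i x_i^{\beta}$ leaves only finitely many candidate points (the centre together with the permutations of points having two equal coordinates). Hence $\{F=0\}$ is finite, so its image under $V$ is finite and has empty interior.

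Putting these together, Bena\"im's theorem yields that $l(\bar x,\omega)$ is almost surely a connected subset of the finite set $\{F=0\}$, hence a single stationary point of $F$, which is the assertion of Lemma~\ref{point}. I expect the main obstacle to be the discovery and verification of the Lyapunov function: the Shahshahani-gradient identity is what makes the nonlinear ($\beta\neq1$) dynamics tractable, and obtaining the constant $\sum_k x_k\partial_kV=-1$ exactly relies on the special column-sum structure of the symmetric matrix $A$. The only other delicate point is the boundary behaviour for $\beta<1$, where the inward-pointing estimate and the barrier property of $V$ are what let the standard theory be applied.
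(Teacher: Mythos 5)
Your proposal follows essentially the same route as the paper's proof, and your Lyapunov function is in fact the paper's in disguise: on the simplex, where $x_1+x_2+x_3=1$, your $V$ equals $-L-1$ with $L$ as in \eqref{eq:lyapunov}, and your Shahshahani-gradient verification $F=-\mathrm{grad}\,V$ is the same computation as the paper's identity $\partial L/\partial x_i = F_i/x_i$ (both hinge on the column sums of $A$ all equalling $1+2a$, which is what gives you $\sum_k x_k\partial_k V=-1$). Both arguments then invoke Bena\"{i}m's strict-Lyapunov-function result to conclude that the limit set is almost surely a connected set of equilibria, and both reduce the lemma to showing that the equilibrium set has no connected subsets other than singletons. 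Your extra attention to the boundary (properness of $V$, the inward bound $F_i\to\frac{a}{1+2a}$ as $x_i\to 0$) is a worthwhile addition that the paper passes over.

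The one genuine gap is in your finiteness argument for $\{F=0\}$. The convexity observation itself is correct and elegant: each coordinate of a stationary point is a positive root of $g_P(t)=(1-a)t^{\beta}-(1+2a)Pt+aP$, which is strictly convex or strictly concave, hence has at most two positive roots; so every stationary point has at most two distinct coordinate values and lies on one of the lines $x_1=x_2$, $x_1=x_3$, $x_2=x_3$. But the next step, ``combining this with $\sum_i x_i=1$ and the self-consistency $P=\sum_i x_i^{\beta}$ leaves only finitely many candidate points,'' does not follow: lying on a union of three lines does not bound the number of stationary points, because $P$ varies from point to point, so the at-most-two roots of $g_P$ are not pinned down --- a priori there could be a continuum of stationary points along a symmetry line, each with its own value of $P$. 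What is missing is an analysis of the restricted one-variable equation: writing a point of the line $x_1=x_2$ as $\left(\frac{1}{z+2},\frac{1}{z+2},\frac{z}{z+2}\right)$, stationarity becomes $\mathcal{P}_{a,\beta}(z)=0$ as in \eqref{eq:pol}, and the paper shows (Propositions \ref{eqcoord} and \ref{Proots}, via the concave-then-convex shape of $\mathcal{P}_{a,\beta}$) that this equation has at most three positive roots. Alternatively, your own inward-pointing bound confines all equilibria to a compact subset of the open simplex, where the restricted equation is real-analytic and not identically zero, so its roots cannot accumulate. To be fair, the paper's proof of Lemma \ref{point} also defers this point, asserting that ``$F$ has no connected sets of stationary points other than single points'' and justifying it only in the subsequent propositions; but since your write-up explicitly rests the conclusion on finiteness of $\{F=0\}$, the missing justification is a real hole in the proof as written.
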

\begin{proof}
  Let
  \begin{equation}
    \label{eq:lyapunov}
    L(x_1,x_2,x_3) = (x_1+x_2+x_3) - \frac{1}{2a+1} \left[ a\log(x_1x_2x_3) - \frac{1}{\beta}(a-1)\log(x_1^{\beta}+x_2^{\beta}+x_3^{\beta}) \right].  
  \end{equation}
  
  Then $L$ is a strict Lyapunov function for $F$.
  In fact, straightforward differentiation gives $$\frac{\partial L}{\partial x_i} = -\frac{1}{x_i} F_i.$$
  Then, denoting an trajectory of $F$ by $x(t) = (x_1(t), x_2(t), x_3(t))$, we obtain
  \[ \frac{d (L \circ x)}{d t} = \sum_{i=1}^3\frac{\partial L }{\partial x_i}\frac{d x_i}{d t} = -\sum_{i=1}^3x_i\left( \frac{\partial L}{\partial x_i}\right)^2 \leq 0,\]
where the equality holds in the above inequality if and only if $F(x)=0$.  Hence $L(x(t))$ is decreasing in $t$, and strictly decreasing except where $x(t)$ is a stationary point of $F$.

We now apply Corollary 6.6 of Bena\"{i}m \cite{benaim} to show that the limit set of $(\bar{x}(n))_{n\in\N}$ will almost
  surely be a stationary point of $F$.  As $\Delta^2$ is precompact and we have identified a strict Lyapunov function, the only condition which needs to be checked is that $F$ has countably many stationary points, which follows from Proposition \ref{eqcoord}.
 In fact, Proposition \ref{eqcoord} shows that $F$ has no connected sets of stationary points other than single points, so the limit set must be a single point, which is one of the stationary points of $F$.
\end{proof}

Note that the Lyapunov function $L$ generalises in an obvious way to more than three types, as long as all off-diagonal entries are equal.

\begin{prop}\label{Proots} Consider $\mathcal{P}_{a,\beta}(z) = az^{\beta+1} - z^{\beta} + (1+a)z -2a$ for $z\in \R^+$, with $a,\beta > 0$.
  \begin{enumerate}
  \item For a given value of $a>1$, $\mathcal{P}_{a,\beta}$ has only one root at $z=1$.\label{a>1}
  \item For a given value of $a<1$, there exists $\beta_1(a)$ satisfying $\frac{1+2a}{1-a}>\beta_1(a)>1$ such that
    \begin{enumerate}
      \item If $\beta>\frac{1+2a}{1-a}$ then $\mathcal{P}_{a,\beta}'(1)<0$ and we have that $\mathcal{P}_{a,\beta}$ has three roots in $\R^+$, $1$, $r_{-}$ and $r_{+}$, labelled so that $r_{-}<1<r_{+}$.  As functions of $\beta$ for fixed $a$, $r_{+}$ is increasing and $r_{-}$ is decreasing.
      \item If $\beta_1(a)<\beta<\frac{1+2a}{1-a}$ then $\mathcal{P}_{a,\beta}'(1)>0$ and $\mathcal{P}_{a,\beta}$ has three roots in $\R^+$, $1$, $r_1$ and $r_2$, labelled so that $1<r_1<r_2$.  As functions of $\beta$ for fixed $a$, $r_{2}$ is increasing and $r_{1}$ is decreasing.
      \item If $\beta<\beta_1(a)$ then $\mathcal{P}_{a,\beta}'(1)>0$ and the only root of $\mathcal{P}_{a,\beta}$ in $\R^+$ is $1$.
    \end{enumerate}
    Furthermore $\beta_1(a)$ is an increasing function of $a$ with $\beta_1(a) \to\infty$ as $a\to 1$.
  \end{enumerate}
\end{prop}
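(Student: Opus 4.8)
The plan is to reduce everything to the one-variable calculus of $\mathcal{P}_{a,\beta}$ on $\R^+$, built on three facts available at the outset: $\mathcal{P}_{a,\beta}(1)=0$ for all $a,\beta$, the boundary behaviour $\mathcal{P}_{a,\beta}(0)=-2a<0$ and $\mathcal{P}_{a,\beta}(z)\to+\infty$, and the concave-then-convex shape noted in the previous proof, which gives that $\mathcal{P}_{a,\beta}$ has either one or three roots in $\R^+$ counting multiplicity. First I would compute $\mathcal{P}_{a,\beta}'(1)=(2a+1)-\beta(1-a)$, whose sign flips exactly at $\beta=\frac{1+2a}{1-a}$, and record the key reference value $\mathcal{P}_{a,\beta}(1/a)=\frac{(1-a)(1+2a)}{a}$, which is positive for $a<1$ and negative for $a>1$. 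These two quantities already encode the thresholds appearing in the statement.

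The engine of the argument is the behaviour of $\mathcal{P}_{a,\beta}'(z)=a(\beta+1)z^{\beta}-\beta z^{\beta-1}+(1+a)$. For $\beta>1$ this is itself decreasing-then-increasing with a unique interior minimum at $z^{\ast}=\frac{\beta-1}{a(\beta+1)}$, and a short computation gives $\min_{z>0}\mathcal{P}_{a,\beta}'(z)=(1+a)-(z^{\ast})^{\beta-1}$. Hence $\mathcal{P}_{a,\beta}$ has interior critical points if and only if $(z^{\ast})^{\beta-1}>1+a$, a condition that forces $z^{\ast}>1$; moreover every critical point $z_c$ satisfies $z_c<\frac{\beta}{a(\beta+1)}<1/a$. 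Two consequences follow at once. For $a\geq 1$ one has $z^{\ast}<1$, so the minimum above is positive, $\mathcal{P}_{a,\beta}$ is strictly increasing, and $z=1$ is its only root; this settles part (1) (with $\beta\leq 1$ handled separately by convexity). And whenever critical points exist at all, the local maximum $z_1<z_2$ and local minimum $z_2$ satisfy $z_2>z^{\ast}>1$, so any local minimum already lies above $1$, while all critical points lie below $1/a$.

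For $a<1$ I would split on the sign of $\mathcal{P}_{a,\beta}'(1)$. If $\beta>\frac{1+2a}{1-a}$ then $\mathcal{P}_{a,\beta}'(1)<0$, so $z=1$ is a downward crossing; combined with $\mathcal{P}_{a,\beta}(0)<0<\mathcal{P}_{a,\beta}(\infty)$ and the ``at most three roots'' bound, the intermediate value theorem yields exactly three simple roots $r_{-}<1<r_{+}$, giving case 2(a). If instead $\beta<\frac{1+2a}{1-a}$ then $\mathcal{P}_{a,\beta}'(1)>0$, and here the localization is decisive: $\mathcal{P}_{a,\beta}'(1)>0$ places $1$ outside $(z_1,z_2)$, while $z_2>1$ forces $1<z_1$, so both critical points exceed $1$. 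Consequently $\mathcal{P}_{a,\beta}<0$ throughout $(0,1)$ and the only possible additional roots are a pair $1<r_1<r_2$ straddling the local minimum $z_2$, whose presence is governed by the local minimum value $V(\beta):=\mathcal{P}_{a,\beta}(z_2(\beta))$. I expect this exclusion of roots in $(0,1)$ to be the genuinely delicate step, since a priori an extra pair could descend below $1$; the clean way around it is precisely the localization of critical points (local minimum above $1$ from $z^{\ast}>1$, local maximum above $1$ from $\mathcal{P}_{a,\beta}'(1)>0$) together with using $z=1/a$ as a cap on the roots.

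It then remains to pin down $\beta_1(a)$ and all the monotonicities. By the envelope identity $V'(\beta)=\partial_\beta\mathcal{P}_{a,\beta}(z_2)=z_2^{\beta}\ln(z_2)(az_2-1)$, which is negative because $z_2>1$ yet $z_2<1/a$, the function $V$ is strictly decreasing; since $V>0$ when critical points first appear and $V<0$ as $\beta\uparrow\frac{1+2a}{1-a}$, it has a unique zero $\beta_1(a)\in\bigl(1,\frac{1+2a}{1-a}\bigr)$ separating case 2(c) ($V>0$, single root) from case 2(b) ($V<0$, three roots). The $\beta$-monotonicity of $r_{\pm}$ and $r_{1},r_{2}$ follows from the implicit function theorem with the same formula $\partial_\beta\mathcal{P}_{a,\beta}(z)=z^{\beta}\ln(z)(az-1)$: each root exceeding $1$ lies below $1/a$ (because $\mathcal{P}_{a,\beta}(1/a)>0$ and $\mathcal{P}_{a,\beta}$ increases beyond $z_2$), so $az-1<0$ there, and combining this with the sign of $\mathcal{P}_{a,\beta}'$ at each crossing gives $r_{+},r_{2}$ increasing and $r_{-},r_{1}$ decreasing in $\beta$. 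Finally, differentiating $V(\beta_1(a),a)=0$ and using $\partial_a\mathcal{P}_{a,\beta}(z)=z^{\beta+1}+z-2>0$ at $z=z_2>1$ shows $\beta_1'(a)>0$, while the requirement $z^{\ast}>1$, i.e. $\beta>\frac{1+a}{1-a}$, for any critical points to exist forces $\beta_1(a)\to\infty$ as $a\to1$.
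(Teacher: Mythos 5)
Most of your argument is correct, and in one step it is actually more careful than the paper's own proof: your localization of the critical points of $\mathcal{P}_{a,\beta}$ (critical points force $(z^*)^{\beta-1}>1+a$, hence $z^*>1$, hence the local minimum $z_2>z^*>1$; and $\mathcal{P}_{a,\beta}'(1)>0$ then pushes the local maximum $z_1$ above $1$ as well) yields a genuine proof that there are no roots in $(0,1)$ when $\beta<\frac{1+2a}{1-a}$, a fact the paper simply asserts. Part (1), case 2(a), the root monotonicity via the implicit function theorem, the monotonicity of $\beta_1$ in $a$, and $\beta_1(a)\geq\frac{1+a}{1-a}\to\infty$ are all sound.

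The genuine gap is in the uniqueness of $\beta_1(a)$, i.e.\ the claim that cases (b) and (c) are separated by a \emph{single} threshold. Your mechanism is that $V(\beta)=\mathcal{P}_{a,\beta}(z_2(\beta))$ is strictly decreasing by the envelope identity, positive ``when critical points first appear'', and negative near $\frac{1+2a}{1-a}$, hence has a unique zero. But $V$ is only defined on the set $B=\{\beta>1 : (z^*)^{\beta-1}>1+a\}$ where critical points exist, and you never show $B$ is an interval. The envelope identity gives $V'<0$ only on the interior of $B$, so you get monotonicity of $V$ on each connected component of $B$ separately; since $V>0$ at the left endpoint of \emph{every} component (your merging argument applies at each one), this permits, a priori, the three-root regime to appear, vanish when the critical points disappear again, and reappear at larger $\beta$ — several thresholds, which would falsify the claimed phase structure itself, not merely the uniqueness of the zero of $V$. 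The phrase ``when critical points first appear'' presupposes exactly the persistence that needs proof, and it is not automatic: the function $h(\beta)=(z^*)^{\beta-1}$ is \emph{not} globally monotone (it tends to $1$ as $\beta\downarrow 1$, dips below $1$ while $z^*<1$, then grows to $\infty$). The repair is short with ingredients you already have. Either show $B$ is a half-line: $\frac{d}{d\beta}\log\left((z^*)^{\beta-1}\right)=\log z^*+\frac{2}{\beta+1}>0$ whenever $z^*\geq 1$, and critical points force $\beta>\frac{1+a}{1-a}$, which is itself a half-line on which $z^*>1$, so $B=(\beta_0(a),\infty)$. Or bypass $V$ entirely, which is what the paper does: your identity $\partial_\beta\mathcal{P}_{a,\beta}(z)=z^{\beta}(\log z)(az-1)<0$ on $(1,1/a)$, together with $\mathcal{P}_{a,\beta}(z)>0$ for $z\geq 1/a$, shows that the property ``$\mathcal{P}_{a,\beta}\leq 0$ somewhere in $(1,1/a)$'' — equivalently, three roots — is upward-monotone in $\beta$, which immediately gives a unique threshold $\beta_1(a)$.
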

\begin{proof}
We start by observing that $\mathcal{P}_{a,\beta}(0)=-2a$ and that $\mathcal{P}_{a,\beta}(z)\to \infty$ as $z\to\infty$, indicating that $\mathcal{P}_{a,\beta}$ has an odd number of roots in $\R^+$, counting multiplicity.  Differentiating with respect to $z$, we have
\begin{equation}
\mathcal{P}_{a,\beta}'(z)=a(\beta+1)z^{\beta}-\beta z^{\beta-1}+1+a\end{equation} and \begin{equation}\mathcal{P}_{a,\beta}''(z)=z^{\beta-2}(a\beta(\beta+1)z-\beta(\beta-1)).\end{equation}  Because $\mathcal{P}_{a,\beta}''(z)$ is increasing on $z\in \R^+$ there cannot be more than three roots of $\mathcal{P}_{a,\beta}$ in $\R^+$.
  \emph{(i)} First, if $\beta<1$ we have that $\mathcal{P}_{a,\beta}''(z)>0$ in $\R^+$ implying that $\mathcal{P}_{a,\beta}'(z)$ is strictly increasing.
  Moreover, $\mathcal{P}_{a,\beta}'(z)$  goes from $-\infty$ to $+\infty$ when $z$ ranges from $0$ to $+\infty$ and $\mathcal{P}_{a,\beta}'(1) > 0 $.
  Then $\mathcal{P}_{a,\beta}'(z)$ changes sign only once at some $z^*<1$.
  Now, since $\mathcal{P}_{a,\beta}(0) = -2a < 0$ and $\mathcal{P}_{a,\beta}(z)$ is decreasing for $z<z^*<1$ and increasing otherwise, it follows that $\mathcal{P}_{a,\beta}(z)$ crosses the $z=0$ line only once at $z=1$.
  Second, the same happens for $\beta > 1$ since $\mathcal{P}_{a,\beta}'(z) > 0$ in $\R^+$ and so $\mathcal{P}_{a,\beta}(z)$ is strictly increasing.
  The case $\beta = 1 $ is trivial.

  \emph{(ii)} If $\beta>\frac{1+2a}{1-a}$ we have $\mathcal{P}_{a,\beta}'(1)=1+2a-\beta(1-a)<0$, indicating that in this case $\mathcal{P}_{a,\beta}$ must have three roots.
  Note that if $\beta=\frac{1+2a}{1-a}$ then $\mathcal{P}_{a,\beta}'(1)=0$ but that $\mathcal{P}_{a,\beta}''(1)<0$, showing that this is a double root, not a triple root, and so there must be another root in that case for larger $z$.
  If $\beta<\frac{1+2a}{1-a}$ then  $\mathcal{P}_{a,\beta}'(1)>0$ and $\mathcal{P}_{a,\beta}$ has no root less than $1$.
  Then, there must be either none, one double, or two distinct additional roots greater than 1.

The derivative of $\mathcal{P}_{a,\beta}(z)$ with respect to $\beta$, for fixed $a$ and $z$, is $(az-1)(\log z)z^{\beta}$.  Thus, for any $z\in(1,1/a)$, $\mathcal{P}_{a,\beta}(z)$ is decreasing in $\beta$, and it follows that if there are roots of $\mathcal{P}_{a,\beta}$ in this range for a particular value of $\beta$ there must also be for any larger $\beta$.  As $\mathcal{P}_{a,\beta}(z)>0$ if $z\geq 1/a$, this shows that there exists $\beta_1(a)\in[1,\frac{1+2a}{1-a}]$ such that there is one root of $\mathcal{P}_{a,\beta}$ when $\beta<\beta_1(a)$ and three when $\beta>\beta_1(a)$.

Let $\beta_0(a)>\frac{2}{1-a}-1>1$ be the unique solution to \begin{equation}\label{beta0}1+a=\left(\left(1-\frac{2}{\beta+1}\right)\frac{1}{a}\right)^{\beta-1}.\end{equation} (It can be seen that \eqref{beta0} has a unique solution for fixed $a<1$, as in that case the right hand side is increasing in $\beta$ if the right hand side is greater than $1$, is equal to $1$ at $\beta=1$ and tends to $\infty$ as $\beta\to\infty$.  That $\beta_0(a)>\frac{2}{1-a}-1$ can be seen by noting that if $\beta$ is a solution of \eqref{beta0} we must have $\left(1-\frac{2}{\beta+1}\right)\frac{1}{a}>1$.)   Then if $\beta<\beta_0(a)$ we have $\mathcal{P}_{a,\beta}'(z)>0$ for all $z$ and hence $\mathcal{P}_{a,\beta}$ is increasing in $z$ and so $z=1$ is the only root.  This shows that $\beta_1(a) \geq \beta_0(a)>1$.

Now, the fact that as mentioned above there is a root greater than $1$ when $\beta=\frac{1+2a}{1-a}$ together with the continuity of $\mathcal{P}_{a,\beta}(z)$ in $\beta$ ensures that there remains a root greater than $1$ for $\beta\in\left(\frac{1+2a}{1-a}-\epsilon,\frac{1+2a}{1-a}\right)$ for some $\epsilon>0$, so $\beta_1(a)<\frac{1+2a}{1-a}$.

The claims that $r_{+}$ and $r_2$ are increasing functions of $\beta$ and that $r_1$ is a decreasing function of $\beta$ also follow from the negative derivative of $\mathcal{P}_{a,\beta}(z)$ with respect to $\beta$ for $z\in(1,1/a)$.  Similarly the claim that $r_{-}$ is a decreasing function of $\beta$ follows from the derivative of $\mathcal{P}_{a,\beta}(z)$ with respect to $\beta$ being positive on $(0,1)$.

To see that  $\beta_1(a)$ is an increasing function of $a$, note that the derivative of $\mathcal{P}_{a,\beta}(z)$ with respect to $a$ is $z^{\beta+1}+z-2$, and for fixed $z>1$ this is positive, meaning that if we are in case (c) for particular choices of $a$ and $\beta$ we will also be in case (c) for the same value of $\beta$ and any larger value of $a$.  That $\beta_1(a)\to \infty$ as $a\to 1$ follows from $\beta_0(a)>\frac{2}{1-a}-1$.
\end{proof}

We now investigate the stability of these roots, for which recall the terminology in Definition \ref{stable_defs}.  

\begin{prop}\label{stable}
If a stationary point for $F$ is of the form $(x,x,rx)$ or $(x,rx,x)$ or $(rx,x,x)$, then it is linearly stable if $\mathcal{P}_{a,\beta}'(r)>0$ and $\frac{r^{\beta}+2}{r+2}>\frac{\beta(1-a)}{2a+1}$, and linearly unstable if either $\mathcal{P}_{a,\beta}'(r)<0$ or $\frac{r^{\beta}+2}{r+2}<\frac{\beta(1-a)}{2a+1}$.\end{prop}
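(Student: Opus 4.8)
The plan is to linearise $F$ at the stationary point and exploit the permutation symmetry of the configuration to diagonalise the Jacobian. Take the representative point $p=(x,x,rx)$ with $x=\frac{1}{2+r}$ (the forms $(x,rx,x)$ and $(rx,x,x)$ follow by relabelling), and write $P=x_1^{\beta}+x_2^{\beta}+x_3^{\beta}$, so that $P=x^{\beta}(2+r^{\beta})$ at $p$. Since $\sum_i F_i\equiv 1-\sum_i x_i$, the field $F$ is tangent to $\Delta^2$, and the relevant object is the restriction of $DF(p)$ to the two-dimensional tangent space $T=\{v:v_1+v_2+v_3=0\}$. Let $\sigma$ be the transposition of the first two coordinates. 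Because $A$ is unchanged by simultaneously swapping its first two rows and columns, $F\circ\sigma=\sigma\circ F$, and since $\sigma p=p$ the linear map $DF(p)$ commutes with $\sigma$ and preserves its $\pm1$ eigenspaces. Intersecting these with $T$ yields two one-dimensional invariant lines, spanned by $(1,-1,0)$ (antisymmetric) and $(1,1,-2)$ (symmetric); these are thus eigenvectors of $DF(p)$, the two eigenvalues are automatically real, and linear stability amounts to both being negative.

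For the antisymmetric eigenvalue I would apply $DF(p)$ to $(1,-1,0)$ directly. At $p$ the Jacobian entries are $\frac{\partial F_i}{\partial x_k}=\frac{\beta x_k^{\beta-1}}{(1+2a)P}\big(A_{ik}-(1+2a)x_i\big)-\delta_{ik}$, using the stationarity relation $u_i=(1+2a)Px_i$. Because the first two coordinates of $p$ are equal, the factors $x_1^{\beta-1}$ and $x_2^{\beta-1}$ coincide and the $(1+2a)x_i$ terms cancel (and the $i=3$ component vanishes entirely); substituting $\frac{x^{\beta-1}}{P}=\frac{2+r}{2+r^{\beta}}$ leaves
\[\lambda_{\mathrm{anti}}=\frac{\beta(1-a)}{2a+1}\cdot\frac{2+r}{2+r^{\beta}}-1.\]
Hence $\lambda_{\mathrm{anti}}<0$ is exactly $\frac{r^{\beta}+2}{r+2}>\frac{\beta(1-a)}{2a+1}$, and $\lambda_{\mathrm{anti}}>0$ is the reverse strict inequality, accounting for the second pair of conditions.

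For the symmetric eigenvalue I would avoid expanding $DF(p)(1,1,-2)$ and instead use the invariant line $x_1=x_2$. Parametrising it by $t=x_3$, so that $x_1=x_2=\frac{1-t}{2}$ and $r=r(t)=\frac{2t}{1-t}$, its tangent vector is proportional to $(1,1,-2)$, whence $\lambda_{\mathrm{sym}}=\frac{d}{dt}F_3$ along the line. A short computation shows that the restriction of $F_3$ to this line equals $\frac{-2\,\mathcal{P}_{a,\beta}(r)}{(1+2a)(2+r)(2+r^{\beta})}$. Differentiating and evaluating at the stationary point, where $\mathcal{P}_{a,\beta}(r)=0$, annihilates every term except the one in which $\mathcal{P}_{a,\beta}$ is itself differentiated, so that $\lambda_{\mathrm{sym}}$ is a strictly negative multiple of $\mathcal{P}_{a,\beta}'(r)\frac{dr}{dt}$; since $\frac{dr}{dt}=\frac{2}{(1-t)^2}>0$ this gives $\operatorname{sign}(\lambda_{\mathrm{sym}})=-\operatorname{sign}\big(\mathcal{P}_{a,\beta}'(r)\big)$, so $\lambda_{\mathrm{sym}}<0$ iff $\mathcal{P}_{a,\beta}'(r)>0$, accounting for the first pair of conditions.

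Combining the two, $p$ is linearly stable exactly when both $\lambda_{\mathrm{sym}}<0$ and $\lambda_{\mathrm{anti}}<0$, i.e. when both stated inequalities hold, while if either fails strictly the corresponding eigenvalue is positive and $p$ is linearly unstable. The genuine content is the symmetry reduction, which splits a potential two-dimensional eigenvalue problem into two scalar computations, together with the observation that $F_3$ restricted to the invariant line is an explicit scalar multiple of $\mathcal{P}_{a,\beta}(r)$, which is what ties the in-line eigenvalue cleanly to $\mathcal{P}_{a,\beta}'(r)$. The main obstacle I anticipate is simply keeping the algebra organised so that each eigenvalue collapses to its stated quantity; choosing the eigenbasis dictated by $\sigma$ and parametrising the line by $t$ are the devices that make this routine.
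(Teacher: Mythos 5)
Your proof is correct and takes essentially the same route as the paper: both split the linearisation at $(x,x,rx)$ into the direction along the invariant line $x_1=x_2$, where $F_3$ restricted to the line is an explicit negative multiple of $\mathcal{P}_{a,\beta}$ (giving the $\mathcal{P}_{a,\beta}'(r)$ criterion), and the antisymmetric direction $(1,-1,0)$, where the paper's expansion of $F_1(x+\epsilon,x-\epsilon,rx)$ is exactly your Jacobian computation (giving the $\frac{r^{\beta}+2}{r+2}$ criterion). Your explicit commutation argument showing these two directions are genuine eigendirections of $DF(p)$ on the tangent space, and your inclusion of the factor $(2+r)$ that the paper omits from the denominator of the restricted $F_3$ (a harmless typo there, since the factor is positive), are minor refinements of the same argument rather than a different approach.
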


\begin{proof}
Without loss of generality we focus on the case $(x,x,rx)$.

We note that the differential equation driven by $F$ keeps the line $x_1=x_2$ invariant, so we consider it restricted to this line;  the equation for $F_3$ gives $$F_3\left(\frac{1-x_3}{2},\frac{1-x_3}{2},x_3\right)=\frac{2a\left(\frac{1-x_3}{2}\right)^{\beta}+x_3^{\beta}}{(1+2a)\left(2\left(\frac{1-x_3}{2}\right)^{\beta}+x_3^{\beta}\right)}-x_3.$$  Let $x_3=z/(z+2)$ so that $x_1=x_2=1/(z+2)$.  Then $$F_3\left(\frac{1-x_3}{2},\frac{1-x_3}{2},x_3\right)=\frac{-2\mathcal{P}_{a,\beta}(z)}{(1+2a)(2+z^{\beta})},$$ and so is positive when $\mathcal{P}_{a,\beta}(z)$ is negative and vice versa.  Hence a stationary point $(x,x,rx)$ is stable in this direction if $\mathcal{P}_{a,\beta}'(r)>0$ and linearly unstable in this direction if $\mathcal{P}_{a,\beta}'(r)<0$.

Because $F$ is symmetric in $x_1$ and $x_2$, the other direction in which we need to consider stability will be perpendicular to this one.  Hence we consider
\begin{equation}\label{estability}
  \begin{split}
  F_1(x+\epsilon,x-\epsilon,rx) &= \frac{(x+\epsilon)^{\beta}+(x-\epsilon)^{\beta}+ar^{\beta}x^{\beta}}{(2a+1)((x+\epsilon)^{\beta}+(x-\epsilon)^{\beta}+r^{\beta}x^{\beta})}-x-\epsilon \\
                                &= F_1(x,x,rx)+\epsilon\left(-1+\frac{\beta(1-a)}{(2a+1)(2+r^{\beta})x}\right)+o(\epsilon) \\
                                &= F_1(x,x,rx)+\epsilon\left(-1+\frac{\beta(1-a)(r+2)}{(2a+1)(2+r^{\beta})}\right)+o(\epsilon).  
  \end{split}
\end{equation}

It follows that $(x,x,rx)$ is a stable stationary point in the direction perpendicular to the line $x_1=x_2$ if $\frac{r^{\beta}+2}{r+2}>\frac{\beta(1-a)}{2a+1}$ and linearly unstable in that direction if the reverse inequality applies.
\end{proof}

We shall henceforth restrict ourselves to the case $a<1$ since Propositions \ref{eqcoord}, \ref{Proots}(\ref{a>1}) and \ref{stable} imply that if $a>1$, $(\frac13,\frac13,\frac13)$ is the only stable stationary point for $F$ and by Lemma \ref{point}, $(\bar{x}(n))_{n \in \N}$ must converge to it.  The case $a=1$ has the probabilities of each colour being $1/3$ regardless of $\bar{x}(n)$ and so it is easily seen that $\bar{x}(n)\to (1/3,1/3,1/3)$ almost surely.

\begin{cor}\label{criteria}
  Assume $a<1$.
  \begin{enumerate}

  \item If $\beta<\beta_1(a)$, then the stationary point $(\frac13,\frac13,\frac13)$ is stable, and is the limit with probability $1$.
  \item If $\beta_1(a)<\beta<\frac{1+2a}{1-a}$, then the stationary points $(\frac13,\frac13,\frac13)$ and $\left(\frac{1}{r_{2}+2},\frac{1}{r_{2}+2},\frac{r_2}{r_{2}+2}\right)$ (and its permutations) are  stable, while the stationary point $\left(\frac{1}{r_{1}+2},\frac{1}{r_{1}+2},\frac{r_1}{r_{1}+2}\right)$ and its permutations are linearly unstable.
  \item If $\beta>\frac{1+2a}{1-a}$, then there are three stationary points of $F$ of the form $(x,x,rx)$ corresponding to the three solutions $r_{-}<1<r_{+}$ of $\mathcal{P}_{a,\beta}(z)=0$ in $\R^+$.  The stationary points $(\frac13,\frac13,\frac13)$ and $\left(\frac{1}{r_{-}+2},\frac{1}{r_{-}+2},\frac{2}{r_{-}+2}\right)$ (and its permutations) are linearly unstable, while $\left(\frac{1}{r_{+}+2},\frac{1}{r_{+}+2},\frac{r_{+}}{r_{+}+2}\right)$ and its permutations are stable.

  \end{enumerate}
  
\end{cor}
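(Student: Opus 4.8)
The plan is to prove the corollary by substituting the root classification of Proposition~\ref{Proots} into the two stability criteria of Proposition~\ref{stable}, and then reading off the almost sure behaviour from Lemma~\ref{point}. The tangential criterion $\mathcal{P}_{a,\beta}'(r)\gtrless 0$ is already controlled by the sign pattern of $\mathcal{P}_{a,\beta}$ along its simple roots, so essentially all of the work sits in the transverse criterion, and my first move would be to make that criterion transparent at a root.

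If $r$ is a positive root of $\mathcal{P}_{a,\beta}$ then $r^{\beta}(1-ar)=(1+a)r-2a$, and since $\mathcal{P}_{a,\beta}(z)>0$ for $z\geq 1/a$ every such root satisfies $r<1/a$, so $1-ar>0$.  Substituting gives the identity
\begin{equation*}
\frac{r^{\beta}+2}{r+2}=\frac{(1-a)(r+2)}{(1-ar)(r+2)}=\frac{1-a}{1-ar},
\end{equation*}
which turns the transverse condition $\frac{r^{\beta}+2}{r+2}>\frac{\beta(1-a)}{2a+1}$ into the linear inequality $\beta(1-ar)<2a+1$, equivalently $r>r^{*}:=\frac{\beta-1-2a}{a\beta}$.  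Transverse stability is therefore monotone in $r$, and at $r=1$ it reduces to $\beta<\frac{1+2a}{1-a}$; thus $r^{*}<1$ precisely in cases (i)--(ii) and $r^{*}>1$ in case (iii).  From here the bookkeeping is routine.  In case (i) the centre is the unique root (Proposition~\ref{Proots}(c)) and the unique stationary point (Proposition~\ref{eqcoord}); both criteria hold there, so it is linearly stable and, by Lemma~\ref{point}, the a.s.\ limit.  In case (ii) the ordering $1<r_{1}<r_{2}$ together with $\mathcal{P}_{a,\beta}(0)<0$ forces $\mathcal{P}_{a,\beta}'(r_{1})<0$ (so $r_{1}$ is linearly unstable) and $\mathcal{P}_{a,\beta}'(r_{2})>0$, while $r_{2}>1>r^{*}$ gives transverse stability, so $r_{2}$ is linearly stable; the centre satisfies both criteria and is stable.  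In case (iii) the ordering $r_{-}<1<r_{+}$ gives $\mathcal{P}_{a,\beta}'(1)<0$ (centre linearly unstable) and $\mathcal{P}_{a,\beta}'(r_{-})>0$, but $r_{-}<1<r^{*}$ makes the transverse eigenvalue positive, so $r_{-}$ is a saddle and linearly unstable.

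The one genuinely delicate point is the transverse stability of the dominant root $r_{+}$ in case (iii), where $\mathcal{P}_{a,\beta}'(r_{+})>0$ but the needed inequality $r_{+}>r^{*}$ (both exceeding $1$) is not automatic.  Since $\mathcal{P}_{a,\beta}<0$ exactly on $(1,r_{+})$, this is equivalent to $\mathcal{P}_{a,\beta}(r^{*})<0$, and a direct substitution simplifies to
\begin{equation*}
\mathcal{P}_{a,\beta}(r^{*})=\frac{2a+1}{\beta}\left(\frac{\beta(1-a)-(1+a)}{a}-(r^{*})^{\beta}\right),
\end{equation*}
so the claim is the sharp inequality $(r^{*})^{\beta}>\frac{\beta(1-a)-(1+a)}{a}$.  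I expect this to be the main obstacle: at the phase boundary $\beta=\frac{1+2a}{1-a}$ one has $r^{*}=1$ and both sides equal $1$, and a short computation shows their derivatives in $\beta$ also agree there (both equal $\frac{1-a}{a}$), so a first-order comparison is inconclusive and one is forced into a convexity/second-order estimate on $\beta\mapsto(r^{*})^{\beta}$, using that $r^{*}$ increases towards $1/a$ so that this factor grows exponentially while the right-hand side grows only linearly.

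A cleaner, if softer, route to the same conclusion would bypass this estimate using the dynamics themselves.  By Lemma~\ref{point} the process converges almost surely to a single stationary point, and by the standard non-convergence of such stochastic approximations to linearly unstable equilibria (the same input used for the positive-probability statements), the limit cannot be linearly unstable.  Having already shown the centre and the $r_{-}$-points to be linearly unstable, if the $r_{+}$-points were also linearly unstable there would be no admissible limit, a contradiction; hence $r_{+}\geq r^{*}$ and the $r_{+}$-points are (at worst non-hyperbolically) stable.  Excluding the degenerate equality $r_{+}=r^{*}$---for which the direct inequality above suffices---then yields genuine linear stability, completing case (iii).  The almost sure convergence claim of part (i) of the corollary needs nothing further, as the centre is the only stationary point there.
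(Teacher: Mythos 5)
Your cases (i) and (ii), and the instability statements in case (iii), are correct and essentially the paper's own argument: the sign changes of $\mathcal{P}_{a,\beta}$ at its simple roots settle the tangential criterion of Proposition~\ref{stable}, and the transverse criterion at $r=1$, at $r_2>1$ and at $r_-<1$ follows from $\beta\lessgtr\frac{1+2a}{1-a}$. Your reduction of the transverse criterion, using that any positive root satisfies $r<1/a$ and hence $\frac{r^{\beta}+2}{r+2}=\frac{1-a}{1-ar}$, so that transverse stability is equivalent to $r>r^{*}:=\frac{\beta-1-2a}{a\beta}$, is correct (I verified both this identity and your formula for $\mathcal{P}_{a,\beta}(r^{*})$), and it is a cleaner tool than the paper's cruder comparison $\frac{r^{\beta}+2}{r+2}\gtrless 1\gtrless\frac{\beta(1-a)}{2a+1}$.

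The genuine gap is in the one step you yourself flag as delicate: the stability of the $r_{+}$ points in case (iii), and neither of your routes closes it. The direct route needs the strict inequality $(r^{*})^{\beta}>\frac{\beta(1-a)-(1+a)}{a}$ for all $\beta>\frac{1+2a}{1-a}$; you correctly diagnose that a first-order comparison at the phase boundary is inconclusive, but you never carry out the second-order estimate, so this route remains a conjecture. The soft route proves strictly less than the corollary asserts: Lemma~\ref{point} together with Proposition~\ref{convergence}(ii) (which is logically independent of the corollary, so invoking it is legitimate) yields only that the $r_{+}$ points are \emph{not linearly unstable}, i.e.\ $r_{+}\geq r^{*}$. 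From there your argument fails twice over. First, your exclusion of the degenerate equality $r_{+}=r^{*}$ is circular: ``the direct inequality above'' to which you appeal is precisely the inequality you did not prove. Second, even granting $r_{+}\geq r^{*}$, ``not linearly unstable'' does not imply ``stable'' in the sense the paper attaches to the word (a local maximum of the Lyapunov function $L$): if $r_{+}=r^{*}$ the transverse eigenvalue vanishes and the linearization decides nothing. The paper closes this step with a different soft argument that avoids the transverse criterion for $r_{+}$ entirely: since $L\to-\infty$ on the boundary of $\Delta^{2}$, $L$ attains its global maximum over the simplex at an interior point, which is necessarily a stationary point of $F$ and a local maximum of $L$, hence stable; a linearly unstable stationary point cannot be a local maximum of $L$ (its unstable manifold carries strictly larger $L$-values), so the maximiser must be an $r_{+}$ point, and by the permutation symmetry all three are stable. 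Alternatively, your direct route can be completed: writing $(r^{*})^{\beta}=\rme^{\phi(\beta)}$ with $\phi(\beta)=\beta\log r^{*}(\beta)$, one computes $\phi''=-\frac{(1+2a)^{2}}{\beta(\beta-1-2a)^{2}}$ while $(\phi')^{2}\geq\frac{(1+2a)^{2}}{(\beta-1-2a)^{2}}>-\phi''$ for $\beta\geq\frac{1+2a}{1-a}$, so $\beta\mapsto(r^{*})^{\beta}$ is strictly convex there, whereas the right-hand side is affine and tangent to it at $\beta=\frac{1+2a}{1-a}$; this gives the strict inequality, and in fact genuine linear stability of the $r_{+}$ points, which is more than the corollary claims.
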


\begin{proof}
\emph{(i)}
  Stability follows from Proposition \ref{stable}, and almost sure convergence from Lemma \ref{point}.
  
\emph{(ii)}
  The shape of $\mathcal{P}_{a,\beta}$ as discussed in the proof of Proposition \ref{Proots} ensures that $r_1,r_2>1$ and that $\mathcal{P}_{a,\beta}'(r_1)<0$ and $\mathcal{P}_{a,\beta}'(r_2)>0$, showing that $\left(\frac{1}{r_{1}+2},\frac{1}{r_{1}+2},\frac{r_1}{r_{1}+2}\right)$ is linearly unstable, and that for the other two stationary points we just need to check the stability perpendicular to the line $x_1=x_2$.  But $\frac{r_2^{\beta}+2}{r_2+2}>1>\frac{\beta(1-a)}{2a+1}$ by our assumption on $\beta$, so the condition from Propostion \ref{stable} is satisfied and so $(\frac13,\frac13,\frac13)$ and $\left(\frac{1}{r_{2}+2},\frac{1}{r_{2}+2},\frac{r_2}{r_{2}+2}\right)$ are stable.
  \emph{(iii)}
  That $\mathcal{P}_{a,\beta}(z)=0$ has three solutions follows from Proposition \ref{Proots}. As $\mathcal{P}_{a,\beta}'(1)<0$ it follows from Propostion \ref{stable} that $(\frac13,\frac13,\frac13)$ is linearly unstable, and as $r_{-}<1$ we have $\frac{r_{-}^{\beta}+2}{r_{-}+2}<1<\frac{\beta(1-a)}{2a+1}$, so $\left(\frac{1}{r_{-}+2},\frac{1}{r_{-}+2},\frac{r_{-}}{r_{-}+2}\right)$ is also linearly unstable.  The global minimum of the Lyapunov function on $\Delta^2$ must be a stable stationary point of $F$, so the remaining stationary points, $\left(\frac{1}{r_{+}+2},\frac{1}{r_{+}+2},\frac{r_{+}}{r_{+}+2}\right)$ and its permutations, must be stable.
\end{proof}

%

The following result completes the proof of Theorem \ref{sym3main}.
\begin{prop}\label{convergence} Let $l(\bar{x})$ denote the limit set of the process $(\bar{x}(n))_{n \in \N}$ defined in \eqref{eq:SAAX} and recall the stability criteria in Proposition \ref{stable}. Then we have
  \begin{enumerate}
  \item $\pr[l(\bar{x}) = \{p\}] > 0$ for stable points $p$ of $F$.
  \item $\pr[l(\bar{x}) = \{p\}] = 0$ for linearly unstable points $p$ of $F$.
  \end{enumerate}
\end{prop}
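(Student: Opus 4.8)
The plan is to treat $(\bar x(n))_{n\in\N}$ as a Robbins--Monro stochastic approximation and invoke the standard dichotomy for (non)convergence to hyperbolic equilibria. First I would record the structural facts of \eqref{eq:SAAX}: the step sizes $\gamma_n = 1/(n_0+n+1)$ satisfy $\sum_n \gamma_n = \infty$ and $\sum_n \gamma_n^2 < \infty$; the increments $\xi_{n+1}$ of \eqref{eq:NOISE} form a uniformly bounded martingale difference sequence with respect to $(\salj_n)$; and $F$ is smooth with isolated stationary points. The one genuinely model-specific ingredient is the non-degeneracy of the noise. Writing $q(n)=(q_1,q_2,q_3)$ for the colour-probability vector $q_i = u_i/\sum_j u_j$, we have $\xi_{n+1} = \mathbf e_{i_{n+1}} - q(n)$, so its conditional covariance is the multinomial form $\E[\xi_{n+1}\xi_{n+1}^{\top}\mid\salj_n] = \mathrm{diag}(q) - qq^{\top}$. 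Since every stationary point $p$ of $F$ has all three coordinates strictly positive, the $q_i$ stay bounded away from $0$ throughout a neighbourhood of $p$, and hence this covariance is uniformly positive definite on the tangent space $T=\{v\in\R^3 : \sum_i v_i = 0\}$ of the simplex. This uniform ellipticity is the key estimate feeding both parts.

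For part (i), a point $p$ that is stable in the sense of Proposition \ref{stable} is a strict local maximum of the Lyapunov function $L$ of Lemma \ref{point}, hence an asymptotically stable equilibrium --- an attractor --- of the flow $\dot x = F(x)$, with an open basin of attraction. I would then argue in two stages. First, the process enters any prescribed neighbourhood of $p$ with positive probability: because the noise is uniformly elliptic on $T$ and $\sum_n\gamma_n = \infty$, the process can be steered into the basin of $p$ with positive probability. Second, a local stochastic-Lyapunov argument --- using $L$, or the quadratic form associated with the linearisation $DF(p)$, as an approximate supermartingale whose error is summable via $\sum_n \gamma_n^2 < \infty$ --- shows that once the process lies deep enough inside the basin it converges to $p$ with conditional probability bounded below. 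Combining the two gives $\pr[l(\bar x) = \{p\}] > 0$; this is exactly the content of the attractor-convergence results of Pemantle \cite{pemantlesurvey} and Bena\"im \cite{benaim}, which I would cite rather than reprove.

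For part (ii), a linearly unstable $p$ has $DF(p)$ with an eigenvalue of strictly positive real part, so there is a unit vector $\theta$ in the unstable eigenspace $E^u\subseteq T$. The uniform positive-definiteness of $\mathrm{diag}(q)-qq^{\top}$ on $T$ forces $\E[\langle \xi_{n+1},\theta\rangle^2\mid\salj_n] \geq c > 0$ in a neighbourhood of $p$; that is, the noise uniformly excites the unstable direction. This is precisely the hypothesis of the non-convergence theorems of Pemantle and their refinements in Bena\"im \cite{benaim}, whose conclusion is $\pr[\bar x(n) \to p] = 0$. Finally, by Lemma \ref{point} the limit set is almost surely a single stationary point, so $\pr[l(\bar x) = \{p\}] = \pr[\bar x(n)\to p] = 0$, as required.

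I expect the main obstacle to be verifying the non-degeneracy hypotheses in exactly the form the cited theorems demand, rather than the dynamics themselves. For part (ii) one must check that the unstable subspace is never orthogonal to the support of $\xi_{n+1}$; this holds here because $E^u\subseteq T$ and the conditional covariance has full rank on $T$, but it must be stated carefully, noting that the restriction to \emph{linearly} unstable points excludes the degenerate boundary cases where an eigenvalue of $DF(p)$ has zero real part. For part (i), the reachability step --- that the process enters the basin with positive probability --- is the most delicate point, and I would handle it either by appealing directly to the attractor results above or, if a self-contained argument is preferred, by an explicit estimate showing that each open subset of the interior of $\Delta^2$ is entered with positive probability.
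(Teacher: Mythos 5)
Your proposal is correct and follows essentially the same route as the paper: part (i) via attainability of the attractor plus Bena\"{i}m's Theorem 7.3, and part (ii) via Pemantle's non-convergence theorem at linearly unstable points, combined with the fact that the limit set is almost surely a single stationary point. The only real difference is in how the noise non-degeneracy is verified --- you use uniform positive-definiteness of the conditional covariance $\mathrm{diag}(q)-qq^{\top}$ on the tangent space, whereas the paper checks Pemantle's positive-part condition $\E[\max\{\xi_{n+1}\cdot\theta,0\}\mid\salj_n]\geq\kappa$ directly by a case analysis on the signs of the coordinates of $\theta$; for bounded mean-zero noise the two conditions are equivalent up to constants, so this is a cosmetic variation.
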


\begin{proof}
  Without loss of generality we focus on the case $(x,x,rx)$.
  
  \emph{(i)} Let us now show that the process $\bar{x}$ in fact converges with positive probability toward a given stable fixed point.
  Of course, it is necessary that the process has positive probability of being arbitrarily close to the attractor at arbitrarily large times.
  That is, a point $p$ is said to be attainable by a process $X$ if for each $t>0$ we have that  $\pr[\exists ~ s \geq t ~:~ X(s) \in N_p] > 0$ for every neighborhood $N_p$ of $p$.
  It turns out that if the function $F + Id$ associated with an urn process $X$ maps the simplex into its interior, it follows that every point of the simplex is attainable by $X$.
  This is indeed the case for our process $\bar{x}$.
  Finally, Bena\"{i}m \cite{benaim} Theorem 7.3 ensures that a given attainable attractor $p$ with non-empty basin of attraction is such that $\pr[l(\bar{x}) = \{p\}] > 0$.

  \emph{(ii)} Let $p$ be a linearly unstable critical point in the interior of $\Delta^2$ and $\mathcal{N}_p \subset \Delta^2$ a neighborhood of $p$.
  The simplex is considered as a differential manifold by identifying its tangent space at any point with the linear subspace
  $T\Delta^2 = \{x \in \R^3 \> : \> \sum_i x_i = 0\}$.
  We need to check Pemantle's non-convergence criteria (Theorem 1 in \cite{pemantle1990}).  As we have $\gamma_n=1/(n_0+n+1)$ and we have bounded noise, the only condition we need to check is condition (6): that the expectation of the positive part of the component of the noise in any given direction is uniformly bounded away from zero.
  Formally, we need that whenever $\bar{x}_n \in \mathcal{N}_p$, there is a constant $\kappa$ such that
  $\E[\max\{\xi_{n+1} \cdot \theta,0\}  \> | \> \mathcal{F}_n] \geq \kappa$
  for every unit vector $\theta =(\theta_1,\theta_2,\theta_3) \in T\Delta^2$, where $\xi_{n+1}$ is the noise term from \eqref{eq:NOISE}.
  For notational simplicity, write $\tilde{u}_i = u_i(n)/(\sum_j u_j(n))$ and note that
\begin{align}\label{eq:drift}
  \E[\max\{\xi_{n+1} \cdot \theta,0\}  \> | \> \mathcal{F}_n]  = 
  & \tilde{u}_1 \max\{\theta_1(1-\tilde{u}_1)-\theta_2 \tilde{u}_2-\theta_3 \tilde{u}_3,0\}  + \nonumber \\
  & \tilde{u}_2 \max\{-\theta_1 \tilde{u}_1+\theta_2(1-\tilde{u}_2)-\theta_3 \tilde{u}_3),0\} + \nonumber \\
  & \tilde{u}_3 \max\{\theta_1 \tilde{u}_1-\theta_2 \tilde{u}_2+\theta_3(1-\tilde{u}_3),0\}.
\end{align}
Now, write $\theta = (\theta_1,\theta_2,\theta_3)$ with $\theta_1+\theta_2+\theta_3=0$ and $\theta_1^2+\theta_2^2+\theta_3^2=1$, and suppose that $\theta_i < 0$ for exactly two coordinates, without loss of generality $i=2,3$.  Then $\min{\theta_2,\theta_3}\leq -\frac{1}{\sqrt{6}}$, as otherwise $(-\theta_2-\theta_3)^2+\theta_2^2+\theta_3^2<1$.
Then we can write \[\theta_1(1-\tilde{u}_1)-\theta_2 \tilde{u}_2-\theta_3 \tilde{u}_3=-\theta_2(1-\tilde{u}_1+\tilde{u}_2)-\theta_3(1-\tilde{u}_1+\tilde{u}_3) \geq \frac{2a}{(1+2a)\sqrt{6}},\] as $\tilde{u}_1\leq \frac{1}{1+2a}$, meaning that the first term on the right hand side of \eqref{eq:drift} is at least $\frac{2a^2}{(1+2a)^2\sqrt{6}}$.
On the other hand, suppose that $\theta_i < 0$ for exactly one coordinate; without loss of generality assume $\theta_3 < 0$.  Then if $\theta_1>\theta_2$ we can write \[\theta_1(1-\tilde{u}_1)-\theta_2 \tilde{u}_2-\theta_3 \tilde{u}_3= \theta_1(1-\tilde{u}_1+\tilde{u}_3)-\theta_2(\tilde{u}_2-\tilde{u}_3).\]  If $\tilde{u}_3\geq \tilde{u}_2$ then this is at least $\theta_1(1-\tilde{u}_1+\tilde{u}_3)\geq \frac{2a}{(1+2a)\sqrt{6}}$, while if $\tilde{u}_3 < \tilde{u}_2$ it is at least $\theta_1(1-\tilde{u}_1-\tilde{u}_2+2\tilde{u}_3)$, which is equal to $3\theta_1\tilde{u}_3$ because $\tilde{u}_1+\tilde{u}_2+\tilde{u}_3=1$.  Hence in that case $\theta_1(1-\tilde{u}_1)-\theta_2 \tilde{u}_2-\theta_3 \tilde{u}_3\geq \frac{3a}{(1+2a)\sqrt{6}}$, so that again the first term on the right hand side of \eqref{eq:drift} is at least $\frac{3a^2}{(1+2a)^2\sqrt{6}}$.  If $\theta_2\geq \theta_1$ similar arguments show the the same but for the second term.  Hence we can set $\kappa=\frac{2a^2}{(1+2a)^2\sqrt{6}}>0$, and apply Theorem 1 of \cite{pemantle1990} to conclude the result.
\end{proof}

\section{Proofs for the cyclic case}\label{cyclic}

\subsection{Introduction}

In this section we prove Theorem \ref{cyclicmain}. Here $A$ is the matrix
$$\begin{pmatrix} 1 & 1 & 0 \\ 0 & 1 & 1 \\ 1 & 0 & 1 \end{pmatrix},$$ and we have \begin{eqnarray*}F_1(x_1,x_2,x_3) &=& \frac{x_1^{\beta}+x_2^{\beta}}{2(x_1^{\beta}+x_2^{\beta}+x_3^{\beta})}-x_1\\
F_2(x_1,x_2,x_3) &=& \frac{x_2^{\beta}+x_3^{\beta}}{2)(x_1^{\beta}+x_2^{\beta}+x_3^{\beta})}-x_2\\
F_3(x_1,x_2,x_3) &=& \frac{x_3^{\beta}+x_1^{\beta}}{2(x_1^{\beta}+x_2^{\beta}+x_3^{\beta})}-x_3\end{eqnarray*}

First, we note that for any choice of $\beta$, $(\frac13,\frac13,\frac13)$ is a stationary point of $F$.
The following two results give information on its stability and show that it is in fact the only stationary point.

\begin{lem}\label{stability}For the vector field $F$, the stationary point at $(\frac13,\frac13,\frac13)$ is stable if $\beta<4$, and a linearly unstable source if $\beta>4$. \end{lem}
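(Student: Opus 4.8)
The plan is to linearise $F$ at the symmetric stationary point and read off the real parts of the eigenvalues of the Jacobian $DF$ restricted to the tangent space $T\Delta^2=\{v\in\R^3:v_1+v_2+v_3=0\}$ of the simplex, which is the invariant subspace governing the dynamics of the urn process.

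First I would compute the entries of $DF$ at $(\tfrac13,\tfrac13,\tfrac13)$. Writing $c=(1/3)^{\beta}$, so that each $x_i^{\beta}=c$, the numerator and denominator of every $F_i$ take the common values $2c$ and $3c$, while $\partial_j x_i^{\beta}=\beta(1/3)^{\beta-1}=3\beta c$ at the symmetric point. A direct quotient-rule calculation then gives $\partial F_1/\partial x_1=\tfrac{\beta}{6}-1$, $\partial F_1/\partial x_2=\tfrac{\beta}{6}$ and $\partial F_1/\partial x_3=-\tfrac{\beta}{3}$, and by the cyclic symmetry of $A$ the second and third rows are the corresponding cyclic shifts. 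Thus $DF=-I+C$, where $C$ is the circulant matrix with first row $(\tfrac{\beta}{6},\tfrac{\beta}{6},-\tfrac{\beta}{3})$.

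The key simplification is that a circulant matrix is diagonalised by the Fourier basis $(1,1,1)$, $(1,\omega,\omega^2)$, $(1,\omega^2,\omega)$ with $\omega=e^{2\pi i/3}$. The vector $(1,1,1)$ is transverse to the simplex and yields the eigenvalue $-1$, which is irrelevant to the flow on $\Delta^2$; the remaining two Fourier vectors satisfy $1+\omega+\omega^2=0$ and so span $T\Delta^2\otimes\C$, giving the complex-conjugate pair of eigenvalues of $C$ equal to $\tfrac{\beta}{4}(1\pm\sqrt{3}\,i)$. Hence the eigenvalues of $DF$ on $T\Delta^2$ are $\left(\tfrac{\beta}{4}-1\right)\pm\tfrac{\sqrt{3}\,\beta}{4}\,i$. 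Their common real part $\tfrac{\beta}{4}-1$ is negative precisely when $\beta<4$ and positive when $\beta>4$, so for $\beta<4$ both eigenvalues have negative real part and $(\tfrac13,\tfrac13,\tfrac13)$ is linearly stable (indeed a stable spiral, the eigenvalues being non-real), while for $\beta>4$ both have positive real part and it is a linearly unstable source. The nonzero imaginary part, present for every $\beta>0$, is worth recording, as it foreshadows the cycling behaviour of this model when $\beta>4$.

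There is no genuine obstacle here: the only real work is the quotient-rule differentiation, which is made painless by exploiting the cyclic symmetry to recognise $DF+I$ as circulant and to diagonalise it explicitly via the Fourier basis. The single point requiring care is to restrict attention to $T\Delta^2$ and discard the spurious $(1,1,1)$-eigenvalue coming from the ambient $\R^3$, since only the two eigenvalues tangent to the simplex determine the stability relevant to the urn.
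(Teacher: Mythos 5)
Your proof is correct, and at its core it is the same argument as the paper's: linearise $F$ at $\left(\frac13,\frac13,\frac13\right)$ and read off the sign of the real part of the eigenvalues, which in both cases come out as $\left(\frac{\beta}{4}-1\right)\pm\frac{i\sqrt{3}\beta}{4}$, giving stability for $\beta<4$ and an unstable spiral source for $\beta>4$. The only real difference is the linear algebra used to handle the simplex constraint: the paper eliminates a coordinate via $x_3=1-x_1-x_2$ and computes a $2\times 2$ Jacobian whose characteristic polynomial is solved directly, whereas you keep the full $3\times 3$ Jacobian, recognise $DF+I$ as circulant, and diagonalise it with the Fourier basis, discarding the spurious eigenvalue $-1$ in the $(1,1,1)$ direction normal to $T\Delta^2$. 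Your route is slightly tidier in that it never breaks the cyclic symmetry, it makes transparent why the eigenvalues form a conjugate pair, and it would generalise immediately to a cyclic model with $d>3$ types (the Fourier eigenvalues $c_0+c_1\omega^k+c_2\omega^{2k}$ become a one-line computation for any $d$); the paper's route requires less setup and is the more routine calculation. Your computed entries ($\partial F_1/\partial x_1=\frac{\beta}{6}-1$, $\partial F_1/\partial x_2=\frac{\beta}{6}$, $\partial F_1/\partial x_3=-\frac{\beta}{3}$) check out, and your closing remark that the persistent nonzero imaginary part foreshadows the cycling behaviour for $\beta>4$ is a nice observation consistent with Theorem \ref{cyclicmain}.
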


\begin{proof}
As we are working with $x\in\Delta^2$, write $x_3=1-x_1-x_2$.  Routine calculus then shows that the Jacobian matrix at $(\frac13,\frac13,\frac13)$ is $$\begin{pmatrix}\frac{\beta}{2}-1 & \frac{\beta}{2} \\ -\frac{\beta}{2} &  -1 \end{pmatrix}.$$
The eigenvalues of this Jacobian are then the roots $\lambda$ of $$\lambda^2+\lambda\left(2-\frac{\beta}2\right)-\left(\frac{\beta}2-1\right)+\left(\frac{\beta}{2}\right)^2$$
which are $$\left(\frac{\beta}{4}-1\right)\pm \frac{i\sqrt{3}\beta }{4}.$$  Since the real part of both eigenvalues is positive if $\beta>4$ and negative if $\beta<4$, the result follows.
\end{proof}

\begin{lem}\label{statpoint}The only stationary point of $F$ in $\Delta^2$ is $(\frac13,\frac13,\frac13)$.\end{lem}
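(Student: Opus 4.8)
The plan is to convert the stationarity equations into a cyclic normal form and then close the argument by monotonicity. Writing $S = x_1^\beta + x_2^\beta + x_3^\beta$, the condition $F_i = 0$ becomes $2Sx_i = x_i^\beta + x_{i+1}^\beta$, with indices read cyclically modulo $3$. First I would regard these three identities as a linear system in the quantities $x_1^\beta, x_2^\beta, x_3^\beta$; subtracting consecutive equations, recombining, and substituting $x_1 + x_2 + x_3 = 1$ should collapse them to the symmetric relations
\[ x_1^\beta = S(1-2x_2), \qquad x_2^\beta = S(1-2x_3), \qquad x_3^\beta = S(1-2x_1). \]

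Before exploiting these I would dispose of the boundary of $\Delta^2$. If some coordinate vanishes, say $x_3 = 0$, the relation $x_3^\beta = S(1 - 2x_1)$ forces $x_1 = 1/2$, and the remaining two relations then quickly produce $(1/2)^\beta = 0$, a contradiction; so every stationary point lies in the interior and has all $x_i > 0$. At such a point the left-hand sides above are strictly positive, so $x_{i+1} < 1/2$ for each $i$, and each coordinate is expressed as a strictly decreasing function of its predecessor: with the fixed positive constant $S$, put $g(t) = \frac12(1 - t^\beta/S)$, so that $x_2 = g(x_1)$, $x_3 = g(x_2)$ and $x_1 = g(x_3)$.

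The conclusion then comes from a short sign chase around the cycle. Suppose $x_1 > x_2$. Applying the decreasing map $g$ gives $x_2 < x_3$, applying it again gives $x_3 > x_1$, and a third application gives $x_1 < x_2$, contradicting the assumption; the case $x_1 < x_2$ is identical after relabelling. Hence $x_1 = x_2$, and since $g$ is injective the relations force $x_2 = x_3$ as well. (Equivalently, $x_1$ is a fixed point of the strictly decreasing map $g \circ g \circ g$, which admits at most one fixed point, necessarily the unique fixed point of $g$ itself.) The simplex constraint $x_1 + x_2 + x_3 = 1$ then fixes the common value at $1/3$.

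I expect the only real work to be the elimination in the first step: getting from $2Sx_i = x_i^\beta + x_{i+1}^\beta$ to the clean form $x_i^\beta = S(1 - 2x_{i+1})$. Once that normal form is available the decreasing structure does everything, the sign chase needs no further case analysis, and, worth emphasising, the argument is uniform in $\beta > 0$, so it covers both sides of the threshold $\beta = 4$ at once.
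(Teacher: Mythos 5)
Your proof is correct and takes essentially the same route as the paper: both arguments first rule out stationary points on the boundary of $\Delta^2$ and then establish interior uniqueness by a cyclic sign chase, following the inequality $x_1>x_2$ around the three coordinates until it contradicts itself. The only difference is packaging --- you extract the monotone structure from the normal form $x_i^{\beta}=S(1-2x_{i+1})$ and the decreasing map $g$, while the paper uses the ratio identities such as $x_1=\frac{x_1^{\beta}+x_2^{\beta}}{x_2^{\beta}+x_3^{\beta}}x_2$ and the resulting sign relations between differences; the underlying mechanism is identical.
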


\begin{proof}
For $x=(x_1,x_2,x_3)\in \Delta^2$ with $F_1(x)=F_2(x)=F_3(x)=0$, we have $x_1=\frac{x_1^{\beta}+x_2^{\beta}}{x_2^{\beta}+x_3^{\beta}}x_2$ and similarly $x_2=\frac{x_2^{\beta}+x_3^{\beta}}{x_3^{\beta}+x_1^{\beta}}x_3$ and $x_3=\frac{x_3^{\beta}+x_1^{\beta}}{x_1^{\beta}+x_2^{\beta}}x_1$.  Using this, $$x_1-x_2=x_2\frac{x_1^{\beta}-x_3^{\beta}}{x_2^{\beta}+x_3^{\beta}},$$ indicating that (if $x_2>0$) if $x_1>x_2$ then also $x_1>x_3$, while if $x_1<x_2$ then $x_1<x_3$.  Similarly, if $x_3>0$ then the signs of $x_2-x_3$ and $x_2-x_1$ are the same, and if $x_1>0$ then the signs of $x_3-x_1$ and $x_3-x_2$ are the same.  Hence the only stationary point of $F$ in the interior of $\Delta^2$ is $(\frac13,\frac13,\frac13)$.

It is also easy to check that if $x_1=0$ then $x_2=0$, and similarly that if $x_2=0$ then $x_3=0$ and if $x_3=0$ then $x_1=0$.  Hence there are no stationary points of $F$ on the boundary of $\Delta^2$.
\end{proof}

We can now complete the proof of Theorem \ref{cyclicmain}.

That we only have one stationary point, and that it is never a saddle, restricts the possibilities for chain transitive sets.  In two dimensions Theorem 6.12 of Bena\"{i}m \cite{benaim} states that chain transitive sets must be unions of stationary points, periodic orbits and cyclic orbit chains.  However, with only one stationary point which is not a saddle cyclic orbit chains are impossible.  We can thus conclude that the limit set must be a connected union of periodic orbits and stationary points.

By Lemma \ref{stability}, if $\beta<4$ then by Lemma \ref{stability} the stationary point $(\frac13,\frac13,\frac13)$  is stable, and hence is an attractor for the flow given by $F$.  As in the proof of Proposition \ref{convergence}, by Theorem 7.3 of Bena\"{i}m \cite{benaim}, to show that there is positive probability of convergence to $(\frac13,\frac13,\frac13)$ it is enough to show that it is an attainable point, that is that we have that  $\pr[\exists ~ s \geq t ~:~ X(s) \in N] > 0$ for every neighborhood $N$ of $(\frac13,\frac13,\frac13)$. This is straightforward to show: for any $\epsilon>0$ there will be points of the form $\left(\frac{n_1}{n},\frac{n_2}{n},\frac{n_3}{n}\right)$ with $n_1,n_2,n_3$ integers satisfying $n_1+n_2+n_3=n$ and $\max\left\{\frac{n_1}{n}-\frac13,\frac{n_2}{n}-\frac13,\frac{n_3}{n}-\frac13\right\}<\epsilon$ for arbitrarily large $n$.  Because even if only one colour is present in the urn initially, there will be positive probability that a second colour is chosen at the first time step, and once two colours are present in the urn all three colours have positive probability of being chosen at each step, there will be positive probability of any such point being reached, so $(\frac13,\frac13,\frac13)$ is indeed attainable.

If $\beta>4$, then by Lemma \ref{stability} $(\frac13,\frac13,\frac13)$ is linearly unstable, so as in Proposition \ref{convergence} it will follow that it is a limit with probability zero if we have positive expectation of the positive part of the component of the noise in any given direction.  In fact, the same argument as in Proposition \ref{convergence} will work here, except that the explicit bounds for the $\tilde{u}_i$ must be replaced by the fact that in a neighbourhood of $(\frac13,\frac13,\frac13)$ there will exist $\delta$ such that $\delta<\tilde{u}_i<1-\delta$ for each $i$, giving $\kappa=\delta^2/\sqrt{6}$.  So we can conclude that $\bar{x}(n)$ has probability zero of converging to a stationary point, and it follows that the limit set must be a periodic orbit or a connected union of periodic orbits, completing the proof of Theorem \ref{cyclicmain}.

\section{Examples and simulations} \label{ex_sim}

In this section we consider some examples, including some where exact calculations are possible, and some simulations.  We also consider some examples which go beyond the cases covered by Theorems \ref{sym3main} and \ref{cyclicmain}.

\subsection{The symmetric case with $\beta=2$}
In the case of Theorem \ref{sym3main} with $\beta=2$, the possible limits and the phase transitions can be explicitly identified.  We find that $\mathcal{P}_{a,2}(z)=(z-1)(az^2+(a-1)+2a)$, with roots given by $z=1$ and $z=\frac{1-a \pm\sqrt{1-2a-7a^2}}{2a}$.  If $a< \frac{\sqrt{8}-1}{7}$, then these are real and positive, and letting $\lambda_1=\frac{3a+1-\sqrt{1-2a-7a^2}}{4(2a+1)}, \lambda_2=\frac{3a+1+\sqrt{1-2a-7a^2}}{4(2a+1)}, \lambda_3=\frac{a+1-\sqrt{1-2a-7a^2}}{2(2a+1)}$ and $\lambda_4=\frac{a+1+\sqrt{1-2a-7a^2}}{2(2a+1)}$, we obtain linearly stable stationary points $(\lambda_1,\lambda_1,\lambda_4)$, $(\lambda_1,\lambda_4,\lambda_1)$ and $(\lambda_4,\lambda_1,\lambda_1)$, and linearly unstable stationary (except at $a=\frac14$) points $(\lambda_2,\lambda_2,\lambda_3)$, $(\lambda_2,\lambda_3,\lambda_2)$, $(\lambda_3,\lambda_2,\lambda_2)$.  In addition, the stationary point $(\frac13,\frac13,\frac13)$ is linearly stable if $a>\frac14$, and linearly unstable if $a<\frac14$.

If $a>\frac{\sqrt{8}-1}{7}$, then $(\frac13,\frac13,\frac13)$ is the only stationary point, and is stable.  In the notation of Theorem \ref{sym3main}, we have $\beta_1\left(\frac{\sqrt{8}-1}{7}\right)=2$, and the three phases are as follows:

\begin{figure}[h]\label{fig:simulations-sym}
  \caption{20 simulations of the symmetric model for $\beta=2$}
  \centering
\begin{subfigure}{.33\textwidth}
  \centering
  \includegraphics[width=5cm, height=5cm]{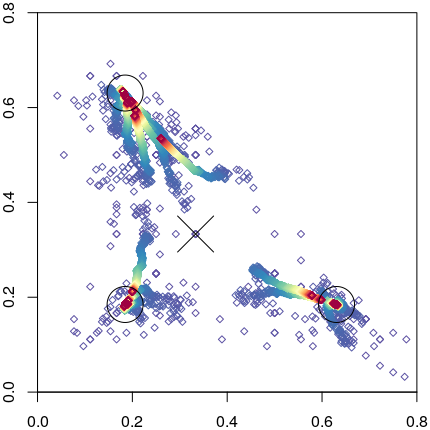}
  \caption{$a=0.2$}\label{ap2}
  \label{fig:sub1}
\end{subfigure}%
 \centering
\begin{subfigure}{.33\textwidth}
  \centering
  \includegraphics[width=5cm, height=5cm]{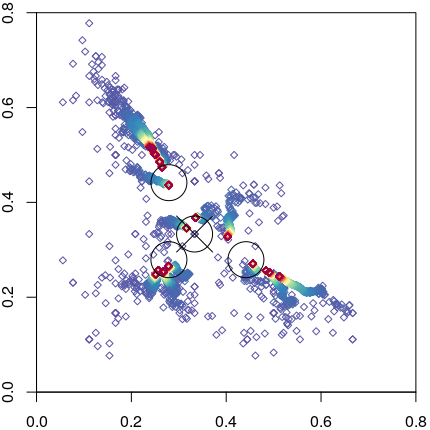}
  \caption{$a=0.26$}\label{ap26}
  \label{fig:sub2}
\end{subfigure}%
\centering
\begin{subfigure}{.33\textwidth}
  \centering
  \includegraphics[width=5cm, height=5cm]{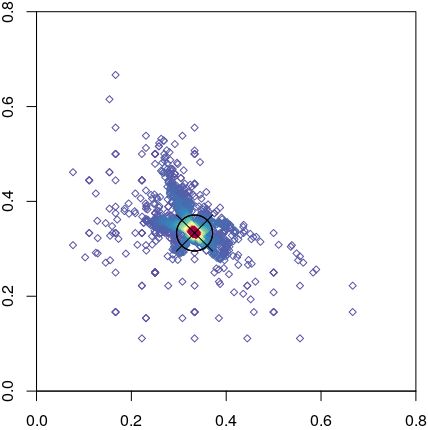}
  \caption{$a=0.5$}\label{ap5}
  \label{fig:sub3}
\end{subfigure}
\end{figure}

\begin{itemize}
\item When $a<\frac14$, $(\frac13,\frac13,\frac13)$ is linearly unstable; there are three other stationary points, $(\lambda_1,\lambda_1,\lambda_4)$ and permutations, placed symmetrically, which are stable.  For example when $a=\frac15$, $(0.1847,0.1847,0.6306)$ and permutations are stable.  Almost surely, one of these three points will be the limit.  A simulation of 20 trajectories of the stochastic process in this case appears in Figure \ref{ap2}.
\item  For $\frac14<a<\frac{\sqrt{8}-1}{7}$, $(\frac13,\frac13,\frac13)$ is now stable but there are also stable stationary points elsewhere, near $(\frac14,\frac14,\frac12)$.  In this case, both symmetric and asymmetric limits have positive probability.  For example, at $a=0.26$, there are stable stationary points at $(0.2792,0.2792,0.4416)$ and permutations as well as $(\frac13,\frac13,\frac13)$.  A simulation of 20 trajectories of the stochastic process in this case appears in Figure \ref{ap26}.
\item For $a>\frac{\sqrt{8}-1}{7}$, $(\frac13,\frac13,\frac13)$ is the only stationary point, and is stable, and will be the limit almost surely.  A simulation of 20 trajectories of the stochastic process in the $a=\frac12$ case appears in Figure \ref{ap5}.
\end{itemize}

At the critical value $a=\frac14$, $(\frac13,\frac13,\frac13)=(\lambda_2,\lambda_2,\lambda_3)$ has zeros as eigenvalues of its Jacobian and so is neither linearly stable nor linearly unstable, while there are stable stationary points at $(\frac14,\frac14,\frac12)$ and permutations; similarly at the critical value $a=\frac{\sqrt{8}-1}{7}$ the stationary point $(\lambda_2,\lambda_2,\lambda_3)=(\lambda_1,\lambda_1,\lambda_4)$ is neither linearly stable nor linearly unstable.

\subsection{The symmetric case with $\beta=3$}
It is also possible to do some explicit calculations when $\beta=3$.  In this case $(\frac13,\frac13,\frac13)$ is linearly stable when $a>\frac25$ and linearly unstable when $a<\frac25$, and we have $$\mathcal{P}_{a,3}(z)=(z-1)(az^3+(a-1)z^2+(a-1)z+2a),$$ where the cubic factor has one real root (which is negative) when $a>a_c=\frac{1}{166}(3.(2)^{1/2}.(3)^{1/4} + 24.(3)^{1/2} + 13.(2)^{1/2}.(3)^{3/4}-20) = 0.4160306$ and three real roots (one of which is negative) when $a<a_c$.  (In the notation of Theorem \ref{sym3main}, $\beta_1(a_c)=3$.)  Hence for $a>a_c$ we get almost sure convergence to $(\frac13,\frac13,\frac13)$, for $a<\frac25$ we get almost sure convergence to one of three asymmetric stationary points, and for $\frac25<a<a_c$ the process may converge either to $(\frac13,\frac13,\frac13)$ or to an asymmetric stationary point, each with positive probability.

\subsection{The cyclic model}

Figure \ref{cyclic3} shows 20 simulations of the cyclic model when $\beta=3$, showing convergence to  $(\frac13,\frac13,\frac13)$.
Figure \ref{cyclic6} shows 20 simulations with $\beta=6$, showing apparent convergence to a single limit cycle.

\begin{figure}[h]\label{fig:simulations-cyclic}
  \caption{20 simulations of the cyclic model}
  \centering
\begin{subfigure}{.33\textwidth}
  \centering
  \includegraphics[width=5cm, height=5cm]{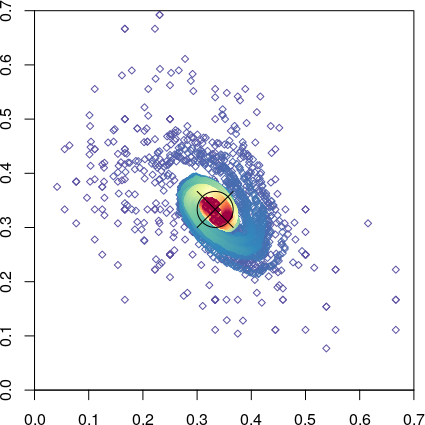}
  \caption{$\beta=3$}\label{cyclic3}
  \label{fig:sub1-cyclic}
\end{subfigure}%
 \centering
\begin{subfigure}{.33\textwidth}
  \centering
  \includegraphics[width=5cm, height=5cm]{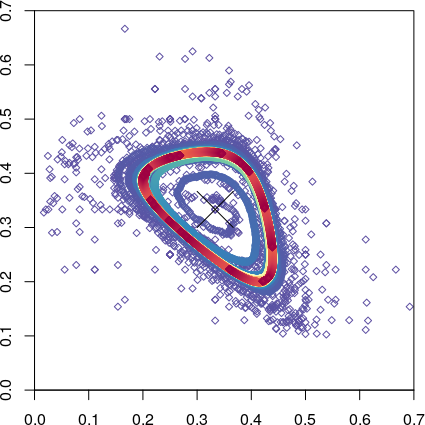}
  \caption{$\beta=6$}\label{cyclic6}
  \label{fig:sub2-cyclic}
\end{subfigure}%
\end{figure}

\subsection{The symmetric case with more than three types}\label{gthree}

It is natural to extend Section \ref{symmetric} to $d>3$ types, letting $A$ be the $d\times d$ matrix with $a_{ii}=1$ for each $i$ and $a_{ij}=a$ for $i\neq j$.  It is straightforward to extend the Lyapunov function \eqref{eq:lyapunov} to this case, meaning that Lemma \ref{point} applies.  However, the later calculations, starting with Proposition \ref{eqcoord}, do not apply.  It thus may be interesting to investigate whether more complex patterns of phases can occur in this case than when $d=3$; however, numerical solution of the stationary point equations for particular values of $a$ when $d=4,5,6$ suggests that the behaviour is in fact very similar to the $d=3$ case, with three phases which parallel those found in Theorem \ref{sym3main}.

\subsection{A more general cyclic case}

It is natural to extend Section \ref{cyclic} by allowing the matrix $A$ to take the form $$\begin{pmatrix} 1 & a & 0 \\ 0 & 1 & a \\ a & 0 & 1\end{pmatrix},$$ allowing for different strengths of the cyclic reinforcement.  It is straightforward to extend Lemma \ref{stability} to this case, showing that the stationary point at $(\frac13,\frac13,\frac13)$ is linearly stable if $a\geq 2$ or if $a<2$ and $\beta<\frac{2(1+a)}{2-a}$, and linearly unstable if $a<2$ and $\beta>\frac{2(1+a)}{2-a}$.  However Lemma \ref{statpoint} does not apply for general $a$ and there may be other stationary points.

Numerical investigation when $\beta=2$ suggests that there are three phases in $a$: in addition to a phase with almost sure convergence to $(\frac13,\frac13,\frac13)$ and a phase with convergence to a limit cycle, there is a phase up to $a \approx 0.25057$ where there are stable stationary points other than $(\frac13,\frac13,\frac13)$ and that the process usually converges to one of these.

\section*{Acknowledgements}

M.C.’s research was supported by CONICET [grant number 10520170300561CO].
The authors are grateful to Andrew Wade for suggestions that improved the presentation.

\bibliographystyle{plain}
\bibliography{references}

\end{document}